\theoremstyle{plain}
\newtheorem{theorem}{Theorem}[section]
\newtheorem{lemma}[theorem]{Lemma}
\newtheorem{corollary}[theorem]{Corollary}
\newtheorem{proposition}[theorem]{Proposition}
\theoremstyle{remark}
\newtheorem{rem}[theorem]{\bf Remark}
\numberwithin{equation}{section}
\DeclareMathOperator{\spec}{spec}
\DeclareMathOperator{\divg}{div}
\DeclareMathOperator{\rot}{curl}
\DeclareMathOperator{\inner}{in}
\DeclareMathOperator{\Osc}{osc}
\DeclareMathOperator{\diam}{diam}
\renewcommand{\phi}{\varphi}
\newcommand{\norm}[1]{\lVert #1 \rVert}
\newcommand{\nn}{\nonumber}
\newcommand{\eps}{\varepsilon}
\newcommand{\R}{\mathbb{R}}
\newcommand{\F}{\mathcal{F}}
\newcommand{\B}{\mathscr{B}}
\newcommand{\C}{\mathbb{C}}
\newcommand{\D}{\mathscr{D}}
\newcommand{\Z}{\mathbb{Z}}
\begin{document}

\title{Estimates for the Lowest Eigenvalue of Magnetic Laplacians}
\date{}

\author{T. Ekholm\footnote{tomase@kth.se, Royal Institute of Technology KTH, Sweden}, 
H. Kova\v r\'{\i}k\footnote{hynek.kovarik@unibs.it, Universit\`a degli studi di Brescia, Italy}, 
F. Portmann\footnote{Corresponding Author; fabian@math.ku.dk, University of Copenhagen, Denmark}}

\maketitle

\begin{abstract}
	We prove various estimates for the first eigenvalue of the magnetic Dirichlet
	Laplacian on a bounded domain in two dimensions. When the magnetic field is constant, we give 
	lower and upper bounds in terms of geometric quantities of the domain. 
	We furthermore prove a lower bound for the first magnetic Neumann eigenvalue in the
	case of constant field.
\end{abstract}

\section{Introduction}
Let $\Omega$ be a bounded open domain in $\R^2$ and 
$B \in L^\infty_{loc}(\R^2)$ a real-valued function, the magnetic field.
To $B$ we associate a vector potential $A \in L^{\infty}(\Omega)$ such that 
$B = \rot A =\partial_1A_2 - \partial_2 A_1$ in $\Omega$, see Section~\ref{sec:first_type_est} for an explicit construction of $A$. 
The magnetic Dirichlet Laplacian on $\Omega$,
\begin{align}\label{def:dirichlet_ham}
	H^D_{\Omega,B} := (-i\nabla + A)^2
\end{align}
is then defined through the Friedrichs extension of the quadratic form
\begin{align*}
	h_{\Omega,A}^{D}[u] := \int_{\Omega}\left|(-i\nabla + A)\, u(x)\right|^2\,dx,
\end{align*}
on $C_c^{\infty}(\Omega)$.
Altogether, there is a huge amount of literature dealing with spectral 
properties of the operator $H^D_{\Omega,B}$ on bounded as well as
unbounded domains in $\R^2$. 
We refer to the \cite{AHS, CFKS} for an introduction on Schr\"odinger operators with magnetic fields.
Various estimates for sums and Riesz means of eigenvalues of 
$H^D_{\Omega,B}$ on bounded domains were established in \cite{elv, kw, llr, ls}.
Hardy-type inequalities for $h_{\Omega,A}^{D}$ were studied in \cite{bls,lw,timo}.
For a version of the well-known Faber-Krahn 
inequality in the case of constant magnetic field we refer to \cite{er}.

The main object of interest in this note will be the quantity
\begin{equation*}
	\lambda_1(\Omega, B) := \inf\spec H_{\Omega,B}^D 
	= \inf_{u \in C_c^{\infty}(\Omega)} \frac{h_{\Omega,A}^D[u]}{\norm{u}^2}\geq 0.
\end{equation*}
Since $\Omega$ is bounded, our conditions on $B$
imply that the form domain of $H_{\Omega,B}^D$ is $H_0^1(\Omega)$
and $\lambda_1(\Omega, B)$ 
is indeed the lowest eigenvalue of $H^D_{\Omega,B}$.
There exist two well-known lower bounds for $\lambda_1(\Omega,B)$.
By a commutator estimate, see e.g. \cite{AHS}, one obtains 
\begin{align}\label{eq:comm_est}
	h_{\Omega,A}^{D}[u] \geq \pm \int_{\Omega}B(x)\, |u(x)|^2\,dx.
\end{align}
For a constant magnetic field $B(x) = B_0$, inequality \eqref{eq:comm_est} yields
\begin{align}\label{eq:comm_est_const}
	\lambda_1(\Omega, B_0) \geq \pm B_0.
\end{align}
The pointwise diamagnetic inequality (see for example \cite[Theorem~7.21]{LL})
\begin{align}\label{eq:diamag_ineq}
	|(-i\nabla + A)\, u(x)| \geq |\nabla|u(x)||, \quad \text{for a.e.}\,\,x\in\Omega,
\end{align}
on the other hand tells us that 
\begin{align*}
	\inf_{u\in H^1_0(\Omega)} \frac{h_{\Omega,A}^D[u]}{\|u\|^2} \ 
	&\geq \inf_{u\in H^1_0(\Omega)} \frac{ \int_{\Omega}|\nabla|u||^2}{\|u\|^2}  
	= \inf_{\begin{subarray}{1} v \in H_0^1(\Omega)\\ v \geq 0 \end{subarray}} \frac{ \int_{\Omega}|\nabla v|^2}{\|v\|^2}\\
	& \geq \inf_{v\in H^1_0(\Omega)} \frac{ \int_{\Omega}|\nabla v|^2}{\|v\|^2}.
\end{align*}
This implies that
\begin{equation}\label{eq:diamag_est}
	\lambda_1(\Omega, B) \geq \lambda_1(\Omega,0).
\end{equation}
Under very weak regularity conditions on $B$
it was shown in \cite{he} that inequality \eqref{eq:diamag_est} is in fact strict; 
$\lambda_1(\Omega, B) > \lambda_1(\Omega,0)$. 

Let us briefly discuss the Neumann case. The quadratic form corresponding to the magnetic Neumann Laplacian
$H_{\Omega,B}^N$ is given by 
$$
	h_{\Omega,A}^N[u]:=\int_{\Omega}|(-i\nabla+A)u(x)|^2\,dx, 
$$
and the form domain is now $H^1(\Omega)$.
Again,
$$
	\mu_1(\Omega,B):= \inf\spec H_{\Omega,B}^N
$$ 
is the first eigenvalue of $H_{\Omega,B}^N$, provided $\Omega$ is sufficiently regular.
The estimate \eqref{eq:diamag_est} remains valid in the 
Neumann case (since \eqref{eq:diamag_ineq} holds a.e.), and gives:
\begin{align}\label{eq:diamag_est_n}
	\mu_1(\Omega,B) \geq \mu_1(\Omega,0) = 0.	
\end{align}
The corresponding estimate \eqref{eq:comm_est} (resp. \eqref{eq:comm_est_const}) is 
a priori not available due to the different boundary conditions.
A lot of attention has been paid to the asymptotic behavior of $\mu_1(\Omega,B)$ for 
large values of the magnetic field, see e.g.~\cite{bo,fh1,lp,ra, si}.

\subsection{Overview of the Main Results}
A natural question which arises in this context is whether estimates \eqref{eq:comm_est}, 
\eqref{eq:comm_est_const} and \eqref{eq:diamag_est}
can be improved by adding a positive term to their righthand sides. 

It is clear that their combinations cannot be achieved by simple addition; already for the constant magnetic
field any lower bound of the type 
\begin{equation}\label{eq:fail_est}
	\lambda_1(\Omega, B_0) \geq \lambda_1(\Omega,0) + cB_0,
\end{equation}
with $c>0$ independent of $B_0$, must fail. Indeed, since the eigenfunction of 
$H_{\Omega,0}^D = -\Delta_{\Omega}^D$ relative to the eigenvalue $\lambda_1(\Omega,0)$ 
may be chosen real-valued, analytic perturbation theory yields 
\begin{equation} \label{eq:small_B_asymp}
	\lambda_1(\Omega, B_0) = \lambda_1(\Omega, 0) + \mathcal{O}(B_0^2), \quad B_0\to 0.
\end{equation}
This clearly contradicts \eqref{eq:fail_est} for $B_0$ small enough.

The main results of our paper are the following. In Section~\ref{sec:first_type_est} we give quantitative lower 
bounds on the quadratic form
\begin{align*}
	h_{\Omega,A}^D[u] \mp \int_{\Omega}B(x)\, |u(x)|^2\,dx,
\end{align*}
denoted by \textit{estimates of the first type}.
Estimates for the difference
$$
	\lambda_1(\Omega, B)-\lambda_1(\Omega, 0),
$$
referred to as \textit{estimates of the second type}, are studied in Section~\ref{sec:second_type_est}.
In both cases, particular attention will be devoted to the case of constant magnetic field.
Last but not least, we will also establish a lower bound of the second type for the lowest 
eigenvalue of the magnetic Neumann Laplacian in the case of constant magnetic field 
in Section~\ref{ssec:second_type_est_n}.

\subsubsection*{Notation:}
Given $x\in\Omega$ and $r>0$, we denote by $\B(x,r)$ the open disc of radius $r$ centered in $x$. 
We also introduce the distance function 
$$
	\delta(x) := {\rm dist}\, (x, \partial\Omega),
$$
and the in-radius of $\Omega$,
$$
	R_{\inner}  := \sup_{x\in\Omega} \delta(x).
$$
Finally, given a positive real number $x$ we denote by $[x]$ its integer part.  

\section{Estimates of the First Type}\label{sec:first_type_est}
In this section we will derive lower bounds on the 
forms
$$
	h_{\Omega,A}^D[u] \mp \int_{\Omega}B(x)\, |u(x)|^2\,dx.
$$
Instead of introducing a vector potential $A$ associated to $B$, we decide link both quantities through 
a so called \textit{super potential}, an approach that is well-known in the study of the Pauli operator, see e.g. 
\cite{ev}. For our magnetic fields however, this approach is equivalent  with
the standard definition given in the introduction.

Let $r>0$ be such that $\overline{\Omega} \subset \B(0, r)$. For any $B\in L^\infty_{loc}(\R^2)$ let
$$
	\F(B) := \left\{ \Psi :\R^2 \to \R \,:\,   \Delta\Psi = B \ \textrm{ in }\  \B(0, r) \right\}
$$
be the family of super potentials associated to $B$. Note that $\F(B)$ is not empty. Indeed, the function
$$
	\Psi_0(x) = \frac{1}{2\pi} \int_{\B(0,r)} \log|x-y|\,B(y)\,dy, \quad x\in \R^2,
$$
which is well defined in view of the regularity of $B$,
solves 
\begin{equation} \label{delta-psi} 
\Delta \Psi_0 (x) = 
\left\{
		\begin{array}{l@{\quad}l}
		B(x) &\quad  x\in\B(0,r) ,\\
		& \\
		0 &\quad  \text{elsewhere},
		\end{array}
		\right. 
\end{equation}
in the distributional sense. Since $B\in L^\infty_{loc}(\R^2)$, standard regularity theory implies that 
$\Psi_0 \in W^{2,p}(\B(0,r))$ for every $1\leq p<\infty$, see \cite[Thm.~9.9]{gt}. 
Moreover, for any $\Psi\in \F(B)$ the difference $\Psi-\Psi_0$ is a harmonic function in $\B(0,r)$. 
Hence for any $\Psi\in \F(B)$ and any $p\in[1,\infty)$ we have $\Psi \in W^{2,p}(\B(0,r))$. 
By Sobolev's embedding theorem it then follows that $\Psi$ is continuous on $\B(0,r)$, so
we may define the oscillation of $\Psi$ over $\Omega$;
\begin{align*}
	\Osc(\Omega, \Psi) = \sup_{x \in \Omega} \Psi(x) - \inf_{x \in \Omega} \Psi(x).
\end{align*}
Accordingly we set
$$
	\D(\Omega, B) : = \inf_{\Psi \in\F(B)} \Osc(\Omega, \Psi). 
$$
Note also that a vector field $A:\B(0,r)\to \R^2$ defined by 
$$
     A := (-\partial_2\Psi,\partial_1\Psi), \quad \Psi\in\F(B),
$$
belongs to $W^{1,p}(\B(0,r))$ for every $1 \leq p<\infty$, in view of the regularity of $\Psi$, and satisfies
$$
	\rot A(x) = B(x) \quad \textrm{ in } \B(0,r)
$$ 
in the distributional sense. Hence by the Sobolev embedding theorem we have $A\in L^\infty(\B(0,r))$
and furthermore $\divg A = 0$ almost everywhere on $\B(0,r)$.

\begin{theorem} \label{thm:general_B_est}
	Let $\Omega \subset \R^2$ be a bounded open domain and suppose that $B\in L_{loc}^{\infty}(\R^2)$.
	Then
	\begin{align}\label{eq:thm_general_B_est}
		h_{\Omega,A}^D[u] \geq \pm \int_{\Omega}B(x)\, |u(x)|^2\,dx  
		+ e^{-2\D(\Omega, B)}\, \lambda_1(\Omega,0) \int_\Omega |u(x)|^2\, dx
	\end{align}
	holds true for all $u\in C^\infty_c(\Omega)$.
\end{theorem}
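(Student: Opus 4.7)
The plan is to prove the estimate via a Pauli-type factorization of $H^D_{\Omega,B}$ that makes the role of the super-potential transparent. Fix $\Psi\in\F(B)$ and take $A=(-\partial_2\Psi,\partial_1\Psi)$; set $P_j := -i\partial_j + A_j$ and $D_\pm := P_1\pm iP_2$. The commutator identity $[P_1,P_2]=-iB$ immediately yields the factorization
\begin{align*}
D_\mp D_\pm = (-i\nabla+A)^2 \pm B, \qquad \text{hence} \qquad h^D_{\Omega,A}[u]\pm\int_\Omega B\,|u|^2\,dx = \int_\Omega |D_\pm u|^2\,dx.
\end{align*}
A short computation with the Wirtinger derivatives $\partial=\tfrac12(\partial_1-i\partial_2)$ and $\bar\partial=\tfrac12(\partial_1+i\partial_2)$ produces the intertwining identities
\begin{align*}
D_-u = -2i\,e^{-\Psi}\,\partial(e^\Psi u), \qquad D_+u = -2i\,e^\Psi\,\bar\partial(e^{-\Psi}u),
\end{align*}
which explicitly conjugate $D_\pm$ to the Cauchy-Riemann operators $-2i\partial$ and $-2i\bar\partial$ via multiplication by $e^{\pm\Psi}$.

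I would treat the case $h-\int B|u|^2$ in detail; the other case is entirely analogous upon swapping $\Psi\leftrightarrow-\Psi$ and $\partial\leftrightarrow\bar\partial$. Setting $v := e^\Psi u$, which belongs to $H_0^1(\Omega)$ because $u\in C_c^\infty(\Omega)$ and $\Psi$ is continuous on $\B(0,r)$, and using the pointwise bound $e^{-2\Psi}\geq e^{-2\sup_\Omega\Psi}$ on $\Omega$,
\begin{align*}
h^D_{\Omega,A}[u]-\int_\Omega B\,|u|^2\,dx = 4\int_\Omega|\partial v|^2\, e^{-2\Psi}\,dx \geq 4\,e^{-2\sup_\Omega\Psi}\int_\Omega|\partial v|^2\,dx.
\end{align*}
The next step is the Wirtinger identity $4\int_\Omega|\partial v|^2\,dx = \int_\Omega|\nabla v|^2\,dx$, valid for any $v\in H_0^1(\Omega)$; this follows from $4\bar\partial\partial=\Delta$ and one integration by parts (justified by density from $C_c^\infty$). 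Combining with the Faber-Krahn inequality $\int|\nabla v|^2\geq\lambda_1(\Omega,0)\int|v|^2$ and the pointwise bound $|v|^2=e^{2\Psi}|u|^2\geq e^{2\inf_\Omega\Psi}|u|^2$, one obtains
\begin{align*}
h^D_{\Omega,A}[u]-\int_\Omega B\,|u|^2\,dx \geq e^{-2\Osc(\Omega,\Psi)}\,\lambda_1(\Omega,0)\int_\Omega|u|^2\,dx.
\end{align*}

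Since this inequality holds for every $\Psi\in\F(B)$, taking the supremum of the right-hand side over $\Psi$ turns $\Osc(\Omega,\Psi)$ into $\D(\Omega,B)$ and proves the theorem for $A=A_\Psi$. For a general $A$ with $\rot A = B$, gauge invariance finishes the job: because $\B(0,r)$ is simply connected, any two such potentials differ by $\nabla\chi$ for some real-valued $\chi$, and conjugation of $u$ by $e^{i\chi}$ preserves every term in the inequality. The main obstacle I anticipate is bookkeeping the exponential weights through the factorization; once the intertwining identities above are in place, the exponent $-2\Osc(\Omega,\Psi)$ falls out automatically from pairing the Wirtinger identity with the two crude pointwise bounds on $e^{\pm 2\Psi}$.
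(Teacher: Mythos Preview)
Your proof is correct and is essentially the same as the paper's: the paper performs the substitution $u = v\,e^{-\Psi}$ (i.e.\ $v = e^\Psi u$) to obtain the identity $h^D_{\Omega,A}[u] - \int_\Omega B|u|^2 = \int_\Omega e^{-2\Psi}|(-i\partial_1-\partial_2)v|^2\,dx$, which is exactly your $4\int e^{-2\Psi}|\partial v|^2$, and then proceeds with the same two pointwise bounds and the same cross-term cancellation (your Wirtinger identity). One terminological quibble: the inequality $\int|\nabla v|^2 \geq \lambda_1(\Omega,0)\int|v|^2$ is the variational characterization of $\lambda_1$, not the Faber--Krahn inequality.
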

\begin{proof}
	We first prove inequality \eqref{eq:thm_general_B_est} with the plus sign on the right hand side. 
	To do so, we pick any $\Psi \in \F(B)$ and perform the ground state substitution $u(x) =: v(x)\, e^{-\Psi(x)}$
	and obtain, after a relatively lengthy (but straightforward) calculation,
	\begin{align} \label{eq:gs_rep}
		h_{\Omega,A}^D[u] - \int_{\Omega}B(x) \, |u(x)|^2\,dx 
		= \int_{\Omega}e^{-2\Psi}|(-i\partial_1 - \partial_2)v|^2\,dx.
	\end{align}
	Next, we have
	\begin{align*}
		\int_{\Omega}e^{-2\Psi}|(-i\partial_1 - \partial_2)v|^2\,dx &\geq e^{-2\sup_{x \in \Omega}\Psi(x)}
		\int_{\Omega}|(-i\partial_1 - \partial_2)v|^2\,dx\\
		&= e^{-2\sup_{x \in \Omega}\Psi(x)}\int_{\Omega}|\nabla v|^2\,dx,
	\end{align*}
	where in the last step we used that
	\begin{align*}
		\int_{\Omega}\left((\partial_1 v)\partial_2 \bar{v} - (\partial_1 \bar{v})\partial_2 v\right) \,dx = 0.
	\end{align*}
	It then follows that
	\begin{align*}
		h_{\Omega,A}^D[u] - \int_{\Omega}B(x)\, |u(x)|^2\,dx &\geq e^{-2\sup_{x \in \Omega}\Psi(x)}\int_{\Omega}|\nabla v|^2\,dx\\
		&\geq e^{-2\sup_{x \in \Omega}\Psi(x)}\,\lambda_1(\Omega,0)\int_{\Omega}|v|^2\,dx\\
		&\geq e^{-2\Osc(\Omega, \Psi)}\, \lambda_1(\Omega,0)\int_{\Omega}|u|^2\,dx.
	\end{align*}
	To prove the corresponding lower bound with the minus sign in front of $B$ on the right hand side, we note that the substitution
	$u(x) =: w(x)\, e^{\Psi(x)}$ gives
	\begin{align*}
		h_{\Omega,A}^D[u] + \int_{\Omega}B(x)\, |u(x)|^2\,dx 
		= \int_{\Omega} e^{2\Psi}\, |(-i\partial_1 + \partial_2)w|^2\,dx.
	\end{align*}
	Moreover, since $\Osc(\Omega, -\Psi)=\Osc(\Omega, \Psi)$, the same procedure as above 
	gives an identical lower bound.
	To complete the proof of \eqref{eq:thm_general_B_est} it now suffices to optimize the right hand side with respect to 
	$\Psi \in\F(B)$, keeping in mind that the spectral properties of $h_{\Omega,A}^D$ only depend on $B$.
\end{proof}

\subsection{Estimates for the Constant Magnetic Field}
In this section we consider the case of a constant magnetic field:  
$$
	B(x) = B_0 > 0.
$$
Clearly, all $\Psi \in \F(B_0)$ are smooth, and optimizing estimate \eqref{eq:thm_general_B_est}
amounts to minimizing the oscillation of $B_0\widetilde{\Psi}$, where $\widetilde{\Psi}$ satisfies
\begin{align*}
	\Delta \widetilde{\Psi} = 1 \quad \textrm{ in } \Omega.
\end{align*}
The optimal $\widetilde{\Psi}$ depends very much on the geometry of $\Omega$, 
and we start with a rather general result. 

We pick any point $x_0\in\Omega$ and a rotation $R(x_0,\theta) \in SO(2)$, parametrized by an angle $\theta \in [0,2\pi)$ and 
center of rotation $x_0$.
Set
\begin{align}\label{def:ell_theta}
	\ell(\Omega, x_0, \theta) := \sup_{x \in R(x_0,\theta)\Omega} x_2  - \inf_{x \in R(x_0, \theta)\Omega}x_2,
\end{align}
the maximal $x_2$-distance of the rotated set $R(x_0,\theta)\Omega$. The quantity $\ell(\Omega)$ is then defined as follows:
\begin{align}\label{def:ell_omega}
	\ell(\Omega) := \inf_{\theta \in [0,2\pi)}\ell(\Omega, x_0, \theta).
\end{align}
It is easily seen that $\ell(\Omega)$ is independent of the choice of $x_0\in\Omega$ and finite, since $\Omega$ is bounded.

\begin{theorem}\label{thm:const_B}
	Let $\Omega \subset \R^2$ a bounded open domain. Then
	\begin{align}\label{eq:thm_const_B_est}
		\lambda_1(\Omega,B_0) \geq B_0 + e^{-\frac{B_0}{4}\, \ell(\Omega)^2}\lambda_1(\Omega,0).
	\end{align}
\end{theorem}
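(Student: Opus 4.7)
The plan is to apply Theorem~\ref{thm:general_B_est} with $B \equiv B_0$; the task then reduces to producing an explicit family of super potentials $\Psi \in \F(B_0)$ whose oscillations over $\Omega$ give a sharp upper bound on $\D(\Omega, B_0)$. Concretely, I aim to establish $\D(\Omega, B_0) \leq B_0\, \ell(\Omega)^2/8$, which, inserted into \eqref{eq:thm_general_B_est}, yields precisely \eqref{eq:thm_const_B_est}. Note that only the plus-sign version of Theorem~\ref{thm:general_B_est} is needed here since $B_0 > 0$.

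The key observation is that for constant $B_0$, the equation $\Delta \Psi = B_0$ admits one-dimensional quadratic solutions in any direction. Picking a reference point $x_0 \in \Omega$, an angle $\theta \in [0, 2\pi)$ and a shift $c \in \R$, set
\begin{align*}
	\Psi_{\theta, c}(x) := \tfrac{B_0}{2}\bigl((R(x_0,\theta)x)_2 - c\bigr)^2.
\end{align*}
Because the Laplacian is invariant under rigid motions, $\Delta \Psi_{\theta,c} = B_0$ everywhere, so $\Psi_{\theta,c} \in \F(B_0)$ for every admissible $\theta$ and $c$.

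Next I would compute the oscillation. Writing $m_\theta$ and $M_\theta$ for the infimum and supremum of $(R(x_0,\theta)x)_2$ over $x \in \Omega$, one has $M_\theta - m_\theta = \ell(\Omega, x_0, \theta)$ by \eqref{def:ell_theta}. The crucial step is to choose $c = (m_\theta + M_\theta)/2$ so that the image of $\Omega$ under $x \mapsto (R(x_0,\theta)x)_2 - c$ is contained in $[-L_\theta/2, L_\theta/2]$ with $L_\theta := \ell(\Omega, x_0, \theta)$. The function $t \mapsto \tfrac{B_0}{2} t^2$ then takes values in $[0, B_0 L_\theta^2/8]$, so $\Osc(\Omega, \Psi_{\theta, c}) \leq B_0\, \ell(\Omega, x_0, \theta)^2/8$. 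Minimizing over $\theta$ and using \eqref{def:ell_omega} gives the claimed bound on $\D(\Omega, B_0)$, whence \eqref{eq:thm_const_B_est}.

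There is no serious obstacle once Theorem~\ref{thm:general_B_est} is in hand; the only clever point is the shift by the midpoint $c$. Without it one would obtain the much cruder bound $\tfrac{B_0}{2}(M_\theta^2 - m_\theta^2)$, which is not translation invariant and blows up as $\Omega$ is moved far from the origin — thereby failing to produce the geometric quantity $\ell(\Omega)$. After this observation, optimizing over rotations is routine.
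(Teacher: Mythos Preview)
Your proposal is correct and follows essentially the same route as the paper: apply Theorem~\ref{thm:general_B_est}, use the one-dimensional quadratic super potential $\Psi(x)=\tfrac{B_0}{2}(x_2-a)^2$ in an optimally chosen direction, and pick the shift $a$ to be the midpoint so that $\Osc(\Omega,\Psi)\leq \tfrac{B_0}{8}\ell(\Omega)^2$. The only cosmetic difference is that the paper first rotates $\Omega$ so that $\ell(\Omega)$ is realized in the $x_2$-direction and then works with a fixed $\Psi$, whereas you keep $\Omega$ fixed and rotate the super potential via the parameter $\theta$; the two are equivalent.
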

\begin{proof}
	In view of estimate \eqref{eq:thm_general_B_est}
	we have
	\begin{align}\label{eq:any_A}
		\lambda_1(\Omega,B_0) \geq B_0 + e^{-2\Osc(\Omega,\Psi)}\, \lambda_1(\Omega,0), 
		\quad \forall\ \Psi\in\F(B_0). 
	\end{align}
	The rotational symmetry of the problem allows us to assume that $\Omega$ has 
	been rotated such that 
	$$
		\ell(\Omega) = \sup_{x \in \Omega}x_2 - \inf_{x \in \Omega}x_2. 
	$$
	Let $\alpha:=  \inf_{x \in \Omega}x_2$ and $\beta:=  \sup_{x \in \Omega}x_2$. 
	We then chose the super potential
	\begin{align}\label{eq:def_Phi_B_const}
		\Psi(x_1,x_2) = \frac{B_0}{2}(x_2 - a)^2,
	\end{align}
	where $a$ is a free parameter. 
	Observe that we may assume that the entire line between $\alpha$ and $\beta$ is contained
	in $\Omega$, because the oscillation of a function over a domain can only increase as the domain is increased
	(and $\Psi$ is globally well-defined).
	Next, we calculate $\Osc(\Omega,\Psi)$ and minimize the result with respect to $a$. 
	A direct calculation shows that the best choice is $a = (\alpha+\beta)/2$, which gives 
	$$
		\Osc(\Omega,\Psi) \leq \frac{B_0}{8}\, (\beta-\alpha)^2 =  \frac{B_0}{8}\,\ell(\Omega)^2.
	$$
	This in combination with \eqref{eq:any_A} implies \eqref{eq:thm_const_B_est}.
\end{proof}

\begin{rem}
	It was shown in \cite{er,er2} that for any $\eps>0$ there exists a constant $C(\eps)$ such that
	\begin{align} \label{lowerb-erd}
		\lambda_1(\B(0,R),B_0) \geq B_0 + \frac{C(\eps)}{R^2}e^{-B_0(\frac 12+\eps)R^2}, 
	\end{align}
	Together with the Faber-Krahn inequality \cite{er} this yields
	\begin{align}\label{eq:faber_krahn_est}
		\lambda_1(\Omega,B_0) 
		&\geq \lambda_1(\B(0,R),B_0)\nn\\
		&\geq B_0 + \frac{C(\eps)}{R^2}e^{-B_0(\frac 12+\eps)R^2}, \quad \eps>0,
	\end{align}
	where $R$ is such that $|\B(0,R)| = |\Omega|$.
	It is clear that \eqref{eq:thm_const_B_est} is an improvement of the estimate
	\eqref{eq:faber_krahn_est} for domains
	that are geometrically very far from the disc, as for example very wide rectangles or thin ellipses.
\end{rem}

\begin{proposition}\label{prop:geom_est}
	Let $\Omega \subset \R^2$ be any bounded convex open domain, then
	\begin{equation}\label{eq:inr_bound}
		2\, R_{\inner}  \leq \ell(\Omega)  \leq 3\, R_{\inner} .
	\end{equation}
\end{proposition}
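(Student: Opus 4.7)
The lower bound is essentially immediate. Since $\overline{\Omega}$ is compact and the distance function $\delta$ is continuous with $\delta|_{\partial\Omega}=0$, the supremum defining $R_{\inner}$ is attained at some $x_\ast \in \Omega$ with $\B(x_\ast, R_{\inner}) \subseteq \Omega$. Rotations being isometries, for every $\theta$ and every reference point $x_0$ the set $R(x_0,\theta)\Omega$ contains a disc of radius $R_{\inner}$, so $\ell(\Omega, x_0, \theta) \geq 2 R_{\inner}$; taking the infimum yields $\ell(\Omega)\geq 2R_{\inner}$.

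The upper bound is more delicate. My plan is to sandwich $\Omega$ inside a triangle $T^\ast$ whose incircle is exactly $\B(x_\ast, R_{\inner})$, and then invoke an elementary width-to-inradius inequality for triangles. Let $\mathcal{T}:=\partial \B(x_\ast, R_{\inner})\cap\partial\Omega$ denote the set of contact points.

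The central geometric step is to prove $x_\ast\in\mathrm{conv}(\mathcal{T})$. I would argue by contradiction via a separating-hyperplane argument: if not, there exists a unit vector $v$ with $(p-x_\ast)\cdot v<-\varepsilon$ for all $p\in\mathcal{T}$, and a short continuity/compactness step (using that points of $\partial\Omega$ slightly outside $\mathcal{T}$ are only marginally farther than $R_{\inner}$ from $x_\ast$) shows $\B(x_\ast+\eta v,R_{\inner})\subseteq\Omega$ for small $\eta>0$, contradicting maximality of $R_{\inner}$. Once this is established, Carath\'eodory gives two cases: either (a) three points $p_1,p_2,p_3\in\mathcal{T}$ contain $x_\ast$ in the interior of the triangle they span, or (b) $\mathrm{conv}(\mathcal{T})$ is a segment through $x_\ast$, giving two antipodal contact points. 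In case (b) the supporting lines at those points are parallel and at distance $2R_{\inner}$, so $\ell(\Omega)\leq 2R_{\inner}$ directly. In case (a) the line $L_i$ tangent to $\B(x_\ast,R_{\inner})$ at $p_i$ is a supporting line of $\Omega$ (convexity of $\Omega$ forces every supporting line through $p_i$ to be tangent to the enclosed disc, and there is only one such tangent), and the three lines $L_i$ bound a triangle $T^\ast\supseteq\Omega$ having $\B(x_\ast,R_{\inner})$ as its incircle.

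It then remains to invoke the elementary triangle estimate $\ell(T^\ast)\leq 3R_{\inner}$: for sides $a_1,a_2,a_3$ of $T^\ast$, area $A$, and perimeter $p$, one has $\ell(T^\ast)=2A/\max_i a_i$ (smallest altitude) while $R_{\inner}=2A/p$, so their ratio equals $p/\max_i a_i\leq 3$. Combined with $\ell(\Omega)\leq\ell(T^\ast)$, which follows from $\Omega\subseteq T^\ast$ and the definition of $\ell$, this closes the argument. The main obstacle is the separating-hyperplane step establishing $x_\ast\in\mathrm{conv}(\mathcal{T})$; once that is in hand, the rest is elementary planar convex geometry.
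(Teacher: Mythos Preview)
Your proof is correct and follows essentially the same strategy as the paper: the lower bound is immediate from the inscribed disc, and the upper bound proceeds by showing that the contact set of the incircle with $\partial\Omega$ cannot be separated from the center (your separating-hyperplane step is exactly the paper's disc-translation argument), then splitting into the antipodal-pair case (strip of width $2R_{\inner}$) and the three-point case (circumscribed triangle with inscribed circle $\B(x_\ast,R_{\inner})$). Your ratio identity $\ell(T)/R_{\inner}=p/\max_i a_i\leq 3$ is actually cleaner than the paper's unproved assertion that the equilateral triangle is extremal.
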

\begin{proof}
	Let $\B \subset\Omega$ be a disc of radius $R_{\inner}$ contained in $\Omega$. Independently of $\Omega$ being convex or 
	not we have
	\begin{equation}\label{eq:aux_1}
		\ell (\Omega, x_0, \theta) \geq 2 R_{\inner},  \qquad \forall\ \theta\in [0,2\pi),\,x_0\in\Omega.
	\end{equation}
	This follows directly from \eqref{def:ell_theta}; for any $\theta\in [0,2\pi)$ and $x_0\in\Omega$ 
	we have that  $\ell (\Omega, x_0, \theta)$ is larger or equal to the length of the intersection of 
	$\Omega$ with the vertical line passing through the center of $\B$. 
	The latter is obviously  larger or equal to $ 2 R_{\inner}$, hence equation \eqref{eq:aux_1}.
	
	It remains to prove the second inequality of \eqref{eq:inr_bound}. 
	Let $\B \subset\Omega$ be a disc of radius $R_{\inner}$. 
	Assume first that $\partial \B \cap \partial \Omega$ contains at least two distinct points 
	$P_1$ and $P_2$ and that the vectors $\overline{OP_1}$ and $\overline{OP_2}$ are linearly 
	dependent. By convexity, $\Omega$ is contained in an infinite rectangle of height $2 R_{\inner}$, 
	and $\ell (\Omega) \leq 3 R_{\inner}$.
	\begin{center}
	\begin{tikzpicture}[line/.style={thick}]
	    \draw[black, thin] (0,0) circle(1);
	    \draw[fill, thick] (0,0) circle(0.02) node[right] {$O$};
	    \draw[black, thin] (-3,1) -- (3,1);
	    \draw[black, thin] (-3,-1) -- (3,-1);
	    \draw[orange, fill, thick] (0,1) circle(0.03);
	    \draw[black, thin] (0,1) circle(0.04) node[above] {$P_1$};
	    \draw[orange, fill, thick] (0,-1) circle(0.03);
	    \draw[black, thin] (0,-1) circle(0.04) node[below] {$P_2$};
	\end{tikzpicture}
	\end{center}
	Assume now that $\partial \B \cap \partial \Omega$ is distributed in such a way that there is a closed, 
	connected set $\Gamma \subset \partial B$ of length $\pi R_{\inner}$ with the property that the distance
	\begin{align*}
		\rho(x) := \inf_{y \in \partial \Omega} |x - y|, \quad x \in \Gamma,
	\end{align*}
	is positive.
	\begin{center}
	\begin{tikzpicture}[line/.style={thick}]
	    \draw[black, thin] (0,0) circle(1);
	    \draw[fill, thick] (0,0) circle(0.02) node[right] {$O$};
	    \draw[orange, fill, thick] (-1,0) circle(0.03);
	    \draw[black, thin] (-1,0) circle(0.04) node[left] {$P_1$};
	    \draw[orange, fill, thick] (0,-1) circle(0.03);
	    \draw[black, thin] (0,-1) circle(0.04) node[below] {$P_2$};
	    \draw[red, very thick] (0.707,-0.707) arc (-45:135:1);
	    \draw[black, very thin] (1,-1) -- (-1, 1);
	    \draw[thick,->] (0,0) -- (1,1) node[right] {$u$};
	    \draw (1,0) node[right] {$\Gamma$};
	    \draw plot [smooth] coordinates {(-0.8,1) (-1.1, 0.5) (-1, 0) (-1.1, -0.9) (0, -1) (0.4, -1.1) (0.8, -0.9) (1.3, -1)};
	    \draw (-1.1,-0.9) node[left] {$\partial \Omega$};        
	\end{tikzpicture}
	\end{center}
	Since $\rho$ is continuous and $\Gamma$ is closed there is an $\varepsilon$ such 
	that $\rho (x) \geq \varepsilon > 0$, for all $x \in \Gamma$. Hence we can move the disc $\B$ 
	a distance $\varepsilon/2$ much in the direction of $u$, such that $\overline{\B}$ becomes a 
	proper subset of $\Omega$. This contradicts that the inner radius of $\Omega$ is $R_{\inner}$.
	
	Assume that $\partial \B \cap \partial \Omega$ contains at least three points $P_1$, $P_2$ and $P_3$. 
	They must be distributed in such a way that there is no such $\Gamma$ as above. 
	Since $\Omega$ is convex, it is contained in a triangle given by the tangent lines of the intersection points.
	\begin{center}
	\begin{tikzpicture}[line/.style={thick}]
	    \draw[black, thin] (0,0) circle(1);
	    \draw[fill, thick] (0,0) circle(0.02) node[right] {$O$};
	    \draw[black, very thin] (-1,-1) -- (-1, 1.732);
	    \draw[black, very thin] (-1,-1) -- (3.732, -1);
	    \draw[black, very thin] (-1,1.732) -- (3.732, -1);
	    \draw[orange, fill, thick] (-1,0) circle(0.03);
	    \draw[black, thin] (-1,0) circle(0.04) node[left] {$P_1$};
	    \draw[orange, fill, thick] (0,-1) circle(0.03);
	    \draw[black, thin] (0,-1) circle(0.04) node[below] {$P_2$};
	    \draw[orange, fill, thick] (0.5,0.866) circle(0.03);
	    \draw[black, thin] (0.5,0.866) circle(0.04) node[below] {$P_3$};
	\end{tikzpicture}
	\end{center}
	For a triangle, $\ell (T)$ is given by the smallest height, which is maximized for the equilateral triangle. 
	Hence $\ell (\Omega) \leq 3 R_{\inner}$.
\end{proof}

Note that the second term on the righthand side of \eqref{eq:thm_const_B_est} decays exponentially fast to zero as 
$B_0$ tends to infinity.
This was in fact already observed in \cite[Remark~1.4.3]{fh2}, where the authors observed that 
\begin{align} \label{fh-general}
	\frac{\lambda_1(\Omega,B_0)}{B_0} = 1+\mathcal{O}(\exp(-\alpha B_0)), \quad B_0 \to \infty,
\end{align}
and $\alpha$ is a positive constant.
The optimal value of $\alpha$ is in general unknown, however it was conjectured in \cite[Remark~1.4.3]{fh2} that $\alpha$ is proportional to $R_{\inner}^2$. This is in agreement with Proposition~\ref{prop:geom_est} and the following result.

\begin{proposition}\label{prop:upper_bound}
	Let $\Omega \subset \R^2$ be a bounded open domain
	and suppose that $B_0  R_{\inner}^{2}\geq 4\, $. Then
	\begin{equation} \label{eq:upper_bound}
		\lambda_1(\Omega, B_0) \leq B_0 +  e\, B_0^2 \,  R_{\inner}^2 \ e^{-\frac{B_0}{2} R_{\inner}^2}\, . 
	\end{equation}
\end{proposition}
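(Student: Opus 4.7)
The strategy is to apply the variational characterization
$$\lambda_1(\Omega,B_0) \leq \frac{h_{\Omega,A}^D[u]}{\|u\|^2}$$
for a carefully chosen trial function supported in a maximal disc contained in $\Omega$. Let $x_0\in\Omega$ be such that $\B(x_0,R_{\inner})\subset\Omega$, and work in the gauge coming from the radial super potential $\Psi(x) = \frac{B_0}{4}|x-x_0|^2$, which is the natural super potential for the Landau ground state $\psi_0(x) := e^{-\Psi(x)}$ centered at $x_0$. Since the spectrum is gauge invariant, we are free to use this specific $\Psi$.

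For a real-valued cutoff $\chi\in C_c^\infty(\B(x_0,R_{\inner}))$, I would take the trial function $u := \chi\, e^{-\Psi}$. Applying identity \eqref{eq:gs_rep} from the proof of Theorem~\ref{thm:general_B_est} (which is an \emph{identity}, not an inequality, and thus equally usable for an upper bound), and observing that $|(-i\partial_1 - \partial_2)\chi|^2 = |\nabla\chi|^2$ for real $\chi$, one obtains
\begin{align*}
	h_{\Omega,A}^D[u] \;=\; B_0\,\|u\|^2 \;+\; \int_{\Omega} e^{-B_0|x-x_0|^2/2}\, |\nabla\chi(x)|^2 \, dx.
\end{align*}
Dividing by $\|u\|^2$ reduces the problem to the purely geometric/analytic task of finding a cutoff $\chi$ that minimizes the Gaussian-weighted Dirichlet quotient on the disc.

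The next step is to select $\chi$ as a radial piecewise linear function, equal to $1$ on $\B(x_0,R_0)$ and decaying linearly to $0$ on the annulus $R_0 \leq |x-x_0| \leq R_{\inner}$, where $R_0 := \sqrt{R_{\inner}^2 - 2/B_0}$. The hypothesis $B_0 R_{\inner}^2 \geq 4$ makes this well defined and guarantees $R_0 \geq R_{\inner}/\sqrt{2}$ and $B_0 R_0^2/2 \geq 1$. With this choice, $|\nabla\chi| = 1/(R_{\inner}-R_0)$ on the annulus, and both integrals reduce to one-dimensional integrals of $r\, e^{-B_0 r^2/2}$ that can be computed explicitly.

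The remaining work is then a direct calculation, which is where the constants need to be tracked carefully. The numerator evaluates to $\frac{2\pi(e-1)}{B_0(R_{\inner}-R_0)^2}\, e^{-B_0 R_{\inner}^2/2}$, while bounding the denominator from below by the integral over $\B(x_0,R_0)$ yields at least $\frac{2\pi}{B_0}(1 - e^{-1})$. Using $(e-1)/(1 - e^{-1}) = e$, the quotient becomes $e\,(R_{\inner}-R_0)^{-2}\, e^{-B_0 R_{\inner}^2/2}$. Finally, the identity $R_{\inner} - R_0 = \frac{2/B_0}{R_{\inner} + R_0} \geq \frac{1}{B_0 R_{\inner}}$ gives $(R_{\inner}-R_0)^{-2} \leq B_0^2 R_{\inner}^2$, producing the desired bound \eqref{eq:upper_bound}. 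The only real subtlety is the choice $R_0 = \sqrt{R_{\inner}^2 - 2/B_0}$, which balances the exponential decay of the Gaussian weight against the blow-up of $|\nabla\chi|^2$ as $R_0\to R_{\inner}$; any cruder choice (for example $R_0 = R_{\inner}/2$) produces the wrong exponent. A brief remark that the Lipschitz $\chi$ is an admissible trial function because it lies in $H_0^1(\Omega)$ (or can be smoothly mollified) takes care of the regularity requirement.
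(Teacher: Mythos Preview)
Your proposal is correct and follows essentially the same route as the paper: use the ground-state identity \eqref{eq:gs_rep} with the radial super potential $\Psi(x)=\tfrac{B_0}{4}|x-x_0|^2$, take a piecewise-linear radial cutoff on the maximal disc, and estimate the resulting Gaussian-weighted Dirichlet quotient. The only cosmetic difference is the choice of inner radius: the paper takes the annulus of width $1/(R_{\inner}B_0)$ (so that $|\nabla v|=R_{\inner}B_0$ exactly), whereas you choose $R_0=\sqrt{R_{\inner}^2-2/B_0}$ (so that the Gaussian weight drops by exactly a factor $e$ across the annulus); after the obvious one-line estimates both choices yield the identical bound \eqref{eq:upper_bound}.
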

\begin{proof} 
	In view of \eqref{eq:gs_rep}, any $v \in H_0^1(\Omega)$ satisfies
	\begin{equation}\label{eq:test_func}
		\lambda_1(\Omega, B_0) 
			\leq B_0 + \frac{ \int_{\Omega}e^{-2\Psi}|(-i\partial_1 - \partial_2)v|^2\,dx}{ \int_{\Omega} e^{-2\Psi}|v|^2\,dx}.
		\end{equation}
	Without loss of generality we may assume that the largest disc contained in 
	$\Omega$ is centered in the origin. Hence $\B(0,  R_{\inner}) \subset\Omega$. 
	We choose the super potential in the form 
	$$
			\Psi(x) = \frac{B_0}{4} |x|^2,
	$$
	and apply inequality \eqref{eq:test_func} with 
	$$
		v(x) = \left\{
		\begin{array}{l@{\quad}l}
		1                   & \quad |x|  \leq R_{\inner}  -\frac{1}{R_{\inner} B_0},\\
		& \\
		R_{\inner} B_0 (R_{\inner}  - |x| )  &\quad R_{\inner}  -\frac{1}{R_{\inner} B_0}  < |x|  < R_{\inner},\\
		& \\
		0            & \quad  \text{elsewhere}. 
		\end{array}
		\right. 
	$$
	Obviously $v \in H^1_0(\Omega)$. Note also that since $v$ is real valued, we have 
	\mbox{$|(-i\partial_1 - \partial_2)v|^2= |\nabla v|^2$}. Performing both integrations in 
	\eqref{eq:test_func} in polar coordinates and taking into account the condition $B_0R_{\inner}^{2} \geq 4$,
	we find that 
	\begin{align*}
		\int_{\Omega}e^{-2\Psi}|(-i\partial_1 - \partial_2)v|^2\,dx 
		& =2\pi B_0^2 R_{\inner}^{2} \int_{R_{\inner}-\frac{1}{R_{\inner} B_0}}^{R_{\inner} } \ e^{-\frac{B_0}{2}\, r^2}\, r\, dr\\
		& = 2\pi B_0 R_{\inner}^{2}\  e^{-\frac{B_0}{2}\, R_{\inner}^2}  \left(\exp\left(1-\frac{1}{2 B_0^2 R_{\inner}^{2}}\right)-1\right) \\
		& \leq 2\pi (e-1)\, B_0 R_{\inner}^{2}\  e^{-\frac{B_0}{2} R_{\inner}^2} 
	\end{align*}
		and
	\begin{align*}
		\int_{\Omega}e^{-2\Psi}|v|^2\,dx 
		\geq  2\pi  \int_0^{R_{\inner}-\frac{1}{R_{\inner} B_0}}  e^{-\frac{B_0}{2}\, r^2}\, r\, dr 
		\geq \frac{2\pi}{B_0}\, (1-e^{-1})
	\end{align*}
	which proves \eqref{eq:upper_bound}.
\end{proof}

\noindent In the special case when $\Omega$ is a disc we have

\begin{proposition}\label{prop:disc_est}
	Let $\Omega=\B(0,R)$ and let $B_0 R^2 \geq 4$. Then
	\begin{equation} \label{disc}
		1+ \frac{j_{0,1}^2}{B_0 R^2} \ e^{-\frac 12 B_0 R^2} 
		\leq \frac{\lambda_1(\B(0,R),B_0)}{B_0}
		\leq 1+ e\, B_0  R^2 \ e^{-\frac 12 B_0 R^2} ,
	\end{equation}
	where $j_{0,1} \simeq 2.405$ is the first zero of the Bessel function $J_0$. 
\end{proposition}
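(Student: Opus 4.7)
The plan is to obtain both bounds as direct corollaries of results already established in the paper, noting that the disc $\B(0,R)$ has in-radius $R_{\inner} = R$.

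For the upper bound, I would simply apply Proposition~\ref{prop:upper_bound} to $\Omega = \B(0,R)$. Since $R_{\inner} = R$ and by hypothesis $B_0 R^2 \geq 4$, inequality \eqref{eq:upper_bound} yields
$$
\lambda_1(\B(0,R), B_0) \leq B_0 + e\, B_0^2 R^2\, e^{-\frac{B_0}{2}R^2},
$$
and dividing through by $B_0$ gives the right-hand inequality of \eqref{disc} directly.

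For the lower bound, I would invoke Theorem~\ref{thm:general_B_est} with a specific choice of super potential exploiting the rotational symmetry of the disc. Take
$$
\Psi(x) = \frac{B_0}{4}|x|^2,
$$
which satisfies $\Delta \Psi = B_0$ and therefore belongs to $\F(B_0)$. Its oscillation over $\B(0,R)$ equals $\Osc(\B(0,R),\Psi) = \tfrac{B_0}{4}R^2$, and consequently $\D(\B(0,R), B_0) \leq \tfrac{B_0 R^2}{4}$. Since the first Dirichlet eigenvalue of the ordinary Laplacian on the disc is the classical value $\lambda_1(\B(0,R),0) = j_{0,1}^2/R^2$, inequality \eqref{eq:thm_general_B_est} (with the plus sign) gives
$$
\lambda_1(\B(0,R), B_0) \;\geq\; B_0 + e^{-\frac{B_0}{2}R^2}\, \frac{j_{0,1}^2}{R^2}.
$$
Dividing by $B_0$ reproduces the left-hand inequality of \eqref{disc}.

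There is essentially no obstacle here; the only point worth emphasizing is that the generic bound coming from Theorem~\ref{thm:const_B} would use $\ell(\B(0,R)) = 2R$ and produce an exponent $-B_0 R^2$, which is strictly worse than $-\tfrac{1}{2} B_0 R^2$. The improvement comes from taking the \emph{radial} super potential $\tfrac{B_0}{4}|x|^2$ rather than the one-dimensional one $\tfrac{B_0}{2}(x_2-a)^2$ used in the proof of Theorem~\ref{thm:const_B}; exploiting the full rotational symmetry of the disc halves the oscillation and hence the exponent in the exponential. The matching exponential rate $e^{-\frac{1}{2}B_0 R^2}$ in the upper bound shows that this choice is essentially optimal for the disc.
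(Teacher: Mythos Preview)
Your proof is correct and follows essentially the same approach as the paper: the upper bound is taken directly from Proposition~\ref{prop:upper_bound}, and the lower bound comes from Theorem~\ref{thm:general_B_est} with the radial super potential $\Psi(x)=\tfrac{B_0}{4}|x|^2$, using $\Osc(\B(0,R),\Psi)=\tfrac{B_0 R^2}{4}$ and $\lambda_1(\B(0,R),0)=j_{0,1}^2/R^2$. Your closing remark explaining why the radial choice beats the one-dimensional super potential of Theorem~\ref{thm:const_B} is a nice addition but not needed for the argument.
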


\begin{proof}
	The upper bound is given by Proposition~\ref{prop:upper_bound}. 
	For the lower bound we use Theorem \ref{thm:general_B_est} with the super potential $\Psi(x) = \frac{B_0}{4} |x|^2$,
	which is in this case better suited to the geometry of the domain than the one used in Proposition~\ref{prop:geom_est}.
	Hence
	$$
		\Osc(\B(0,R), \Psi) = \frac{B_0 R^2}{4},
	$$ 
	and since $\lambda(\B(0,R), 0) = \frac{j_{0,1}^2}{R^2}$, 
	the lower bound in \eqref{disc} follows from \eqref{eq:thm_general_B_est}. 
\end{proof}

\begin{rem}
	In \cite[Prop.~4.4]{hm} it was stated that
	\begin{align} \label{eq-HM}
		\frac{\lambda_1(\B(0,R),B_0)}{B_0} \,\sim \, 1+ \frac{2^{3/2}}{\sqrt{\pi}}\ \sqrt{B_0}\ R\,e^{-\frac 12 B_0 R^2},
		\quad B_0 \to \infty.
	\end{align}
	B.~Helffer however pointed out to us that the argument used to establish the above was slightly flawed -- the above
	is only true if taken as a lower bound. 
	However, from Proposition~\ref{prop:disc_est} we easily see that
	\begin{align*}
		\lim_{B_0 \to \infty}\frac{\log\left [\lambda_1(\B(0,R),B_0)-B_0\right ]}{B_0R^2} = -\frac 12\, ,
	\end{align*}
	which confirms, up to a pre-factor, the asymptotic \eqref{eq-HM} stated in \cite[Prop.~4.4]{hm}. 
	Note also that the lower bound in \eqref{disc} improves qualitatively the lower 
	bound \eqref{lowerb-erd}, since it allows us to pass to the limit $\eps\to 0$.
\end{rem}

\section{Estimates of the Second Type}\label{sec:second_type_est}
\subsection{Dirichlet boundary conditions}\label{ssec:second_type_est_d}
In this section we are going to establish a lower bound on the difference $\lambda_1(\Omega,B) - \lambda_1(\Omega,0)$. Given a point $x\in\Omega$, we introduce the function
\begin{equation}\label{def:flux_local}
	\Phi(r,x) := \frac{1}{2\pi} \int_{\B(x,r)} B(x)\, dx, \quad 0\leq r \leq \delta(x),
\end{equation}
the flux through $\B(x,r)$.
The next result shows that as soon as the magnetic field is not identically zero in $\Omega$, 
the difference $\lambda_1(\Omega,B) - \lambda_1(\Omega,0)$ is strictly positive.

\begin{theorem}\label{thm:ev_diff}
	Let $\Omega$ be a bounded open domain in $\R^2$ and $B\in L_{loc}^\infty(\R^2)$. If $B$ is not 
	identically zero in $\Omega$, then there exists $y\in\Omega$ such that
	\begin{align}\label{eq:ev_diff}
		\lambda_1(\Omega,B) - \lambda_1(\Omega,0) \geq D(y,B),
	\end{align}
	where $D(y,B)>0$ is given by \eqref{def:D_y}.
\end{theorem}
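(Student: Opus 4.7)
The strategy I would follow is a ground-state representation with respect to the Dirichlet Laplacian ground state, which reduces the problem to a quantitative weighted magnetic Poincar\'e-type inequality, combined with a local argument on a disc of non-trivial flux. Let $\psi_0 \in H_0^1(\Omega)$ denote the positive, $L^2$-normalized ground state of $-\Delta^D_\Omega$, so that $-\Delta \psi_0 = \lambda_1(\Omega,0)\psi_0$ and $\psi_0 > 0$ in $\Omega$ by the strong maximum principle. For $u \in C_c^\infty(\Omega)$, setting $u = \psi_0 w$ and integrating by parts via the eigenvalue equation gives the gauge-invariant identity
\begin{equation*}
h^D_{\Omega,A}[u] - \lambda_1(\Omega,0)\,\|u\|^2 = \int_\Omega \psi_0^2 \, |(-i\nabla + A)w|^2 \, dx,
\end{equation*}
which is a refined form of the diamagnetic inequality. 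The task reduces to bounding the right-hand side below by $D(y,B)\|u\|^2$ for some $y \in \Omega$.

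Since $B \not\equiv 0$ in $\Omega$, a continuity argument for the map $(x,r) \mapsto \Phi(r,x)$ produces $y \in \Omega$ and $r \in (0,\delta(y)]$ with $\Phi(r,y) \neq 0$, giving a disc $\B(y,r) \subset \Omega$ of non-trivial flux. On this disc I would invoke a magnetic Neumann--Poincar\'e-type inequality
\begin{equation*}
\int_{\B(y,r)} |(-i\nabla + A)w|^2 \, dx \geq \kappa(\Phi(r,y)) \int_{\B(y,r)} |w|^2 \, dx, \quad w \in H^1(\B(y,r)),
\end{equation*}
with $\kappa > 0$: a zero minimizer would have to be gauge-covariantly constant, which is incompatible with $\Phi(r,y) \neq 0$ on the simply connected disc, and an explicit quantitative $\kappa$ can be obtained either by testing constants in the symmetric gauge (giving $\kappa \sim \Phi(r,y)^2/r^2$ for small flux) or by applying Theorem~\ref{thm:general_B_est} on $\B(y,r)$. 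Combined with the pointwise lower bound $\psi_0 \geq m := \min_{\overline{\B(y,r)}} \psi_0 > 0$, this controls the weighted energy restricted to the disc.

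The main obstacle, as I see it, is promoting this localized lower bound to a global one involving the full norm $\|u\|^2 = \int_\Omega \psi_0^2 |w|^2$, since $w$ could in principle concentrate outside $\B(y,r)$. I would close the argument by a case analysis: either a definite fraction of the weighted mass $\int_\Omega \psi_0^2 |w|^2$ lives in $\B(y,r)$, in which case the local estimate above yields the bound directly, or $u$ is concentrated in $\Omega \setminus \overline{\B(y,r/2)}$, in which case an IMS-type cutoff combined with the strict domain monotonicity $\lambda_1(\Omega \setminus \overline{\B(y,r/2)},0) > \lambda_1(\Omega,0)$ provides an alternative improvement over the diamagnetic bound. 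Matching the two cases, balancing the IMS error $\mathcal{O}(r^{-2}\|u\|^2)$ against the magnetic gain, and optimizing over $r \in (0, \delta(y)]$ (with $y$ fixed) yields the explicit constant $D(y,B)$ appearing in \eqref{def:D_y}, whose positivity is automatic as soon as one finds a pair $(y,r)$ with $\Phi(r,y) \neq 0$, which is guaranteed by $B \not\equiv 0$.
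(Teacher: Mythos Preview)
Your ground-state representation $u=\psi_0 w$ and the identification of the key local step---a magnetic Poincar\'e inequality on a disc carrying non-trivial flux---are exactly what the paper does. Two comments on that part: the inequality you need holds for all $w\in H^1(\B(y,r))$, i.e.\ it is a \emph{Neumann}-type bound, so your suggestion to obtain $\kappa$ from Theorem~\ref{thm:general_B_est} does not work (that theorem is a Dirichlet estimate for $u\in C_c^\infty$); the paper instead invokes Lemma~\ref{lem:ek}, which is precisely such a Neumann-type bound.

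The substantive divergence is in how you globalize. The paper does \emph{not} use IMS localization or domain monotonicity of $\lambda_1$. It splits $\int_\Omega\psi_0^2|(-i\nabla+A)w|^2$ in half; one half is localized to $\B(y,R)$ and fed through Lemma~\ref{lem:ek} and the bounds on $\psi_0$ to produce $F_2\int_{\B(y,R)}|u|^2$. The other half is undone back to $\tfrac12(h^D_{\Omega,A}[u]-\lambda_1\|u\|^2)$, the diamagnetic inequality replaces $u$ by $|u|$, and then the decomposition $|u|=\alpha\phi_1+f$ with $f\perp\phi_1$ brings in the \emph{spectral gap} $\Delta\lambda=\lambda_2(\Omega,0)-\lambda_1(\Omega,0)$. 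An elementary inequality (Lemma~\ref{lem:c_nbr_est}) merges the $\frac{\Delta\lambda}{4}|f|^2$ and $F_2|\alpha\phi_1+f|^2$ terms, and the resulting one-parameter variational problem is solved explicitly to give $D(y,B)$.

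Consequently, your final claim that ``optimizing over $r$ \dots\ yields the explicit constant $D(y,B)$ appearing in \eqref{def:D_y}'' is not correct: the formula \eqref{def:D_y} is built from $\Delta\lambda$, the ratio $\inf_{\B(y,R)}\phi_1^2/\sup_{\B(y,R)}\phi_1^2$, and $\int_{\B(y,R)}\phi_1^2$, none of which your IMS/domain-monotonicity scheme ever produces. Even the qualitative version of your Case~2 is delicate: the IMS error scales like $r^{-2}$ while the monotonicity gain $\lambda_1(\Omega\setminus\overline{\B(y,r/2)},0)-\lambda_1(\Omega,0)$ tends to $0$ as $r\to 0$ (in two dimensions the shift from removing a small disc is only of order $1/|\log r|$), so there is no regime in which the balance you allude to is automatic. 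The paper's spectral-gap route sidesteps this entirely and gives the stated constant directly.
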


For the proof of Theorem~\ref{thm:ev_diff} we are going to need the following elementary result. 
\begin{lemma}\label{lem:c_nbr_est}
	Let $z_1,z_2 \in\C$ and let $\beta, \gamma$ be positive constants. Then
	\begin{align*}
		\beta |z_1|^2 + \gamma |z_1 + z_2|^2 \geq \frac{\beta\gamma}{\beta+\gamma}\ |z_2|^2.
	\end{align*}
\end{lemma}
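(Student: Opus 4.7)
The cleanest route is completion of squares: the left-hand side is a quadratic form in $z_1$ with fixed $z_2$, and its minimum in $z_1$ can be computed explicitly. So the plan is to expand $\gamma |z_1+z_2|^2$, collect powers of $z_1$, complete the square in $z_1$, and identify the residual coefficient of $|z_2|^2$.

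More concretely, I would first write
\begin{equation*}
\beta |z_1|^2 + \gamma |z_1+z_2|^2 = (\beta+\gamma)\,|z_1|^2 + 2\gamma\,\mathrm{Re}(\bar z_1 z_2) + \gamma|z_2|^2,
\end{equation*}
then rewrite the first two terms as
\begin{equation*}
(\beta+\gamma)\left|z_1 + \tfrac{\gamma}{\beta+\gamma}\, z_2\right|^{2} - \tfrac{\gamma^{2}}{\beta+\gamma}\,|z_2|^{2}.
\end{equation*}
Adding the remaining $\gamma |z_2|^2$ and simplifying $\gamma - \tfrac{\gamma^2}{\beta+\gamma} = \tfrac{\beta\gamma}{\beta+\gamma}$ yields the identity
\begin{equation*}
\beta |z_1|^2 + \gamma |z_1+z_2|^2 = (\beta+\gamma)\left|z_1 + \tfrac{\gamma}{\beta+\gamma}\, z_2\right|^{2} + \tfrac{\beta\gamma}{\beta+\gamma}\,|z_2|^{2}.
\end{equation*}
Dropping the nonnegative first term then gives the claim, and also shows that the inequality is sharp, with equality iff $z_1 = -\tfrac{\gamma}{\beta+\gamma}z_2$.

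There is no real obstacle here; the only thing to be mildly careful about is the sign handling when using $2\,\mathrm{Re}(\bar z_1 z_2)$ for complex (as opposed to real) $z_1, z_2$, but since completion of squares works verbatim in $\mathbb{C}$ as a real inner-product space, no additional argument is required. An equivalent but slightly less direct route would be to apply the elementary bound $|a-b|^2 \leq (1+t)|a|^2 + (1+1/t)|b|^2$ with $a = z_1+z_2$, $b = z_1$ and the optimal choice $t = \gamma/\beta$; this leads to the same constant $\beta\gamma/(\beta+\gamma)$, but the completion-of-squares proof is preferable because it exhibits the extremizer.
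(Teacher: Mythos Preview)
Your proof is correct. The paper takes a slightly different route: it bounds the cross term via Young's inequality with a parameter,
\[
|\bar z_1 z_2| + |z_1 \bar z_2| \leq \eps|z_1|^2 + \eps^{-1}|z_2|^2,
\]
obtaining $\beta|z_1|^2 + \gamma|z_1+z_2|^2 \geq (\beta+\gamma(1-\eps))|z_1|^2 + \gamma(1-\eps^{-1})|z_2|^2$, and then chooses $\eps = (\beta+\gamma)/\gamma$ to kill the $|z_1|^2$ coefficient. This is essentially the ``alternative route'' you sketch at the end. Your primary argument via completion of squares is cleaner: it produces an exact identity rather than an inequality-plus-optimization, and as you note it immediately exhibits the extremizer $z_1 = -\tfrac{\gamma}{\beta+\gamma}z_2$ and hence sharpness, which the paper's argument does not make explicit.
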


\begin{proof}
	Since 
	$$
		|\bar{z}_1z_2| + |z_1 \bar{z}_2| \leq \eps|z_1|^2 +\eps^{-1}|z_2|^2, \quad \forall\ \eps>0,
	$$
	we have 
	$$
		\beta\, |z_1|^2 + \gamma\, |z_1 + z_2|^2 \geq (\beta+ \gamma(1-\eps))|z_1|^2 + \gamma (1-\eps^{-1})|z_2|^2.
	$$
	The claim now follows upon setting $\eps= \frac{\beta+\gamma}{\gamma}$. 
\end{proof}

\begin{proof}[Proof of Theorem~\ref{thm:ev_diff}]
	Let $\phi_1$ be the positive normalized ground state of the Dirichlet Laplacian $H_{\Omega,0}^D$,
	\begin{align*}
		H_{\Omega,0}^D\phi_1 = \lambda_1(\Omega,0)\phi_1, \qquad \int_\Omega \phi_1^2\,dx = 1.
	\end{align*}
	To simplify the notation, we abbreviate $\lambda_1(\Omega,0) =\lambda_1$ and accordingly 
	for higher eigenvalues of $H_{\Omega,0}^D$. 
	
	For $u \in C^\infty_c(\Omega)$, we perform a groundstate substitution 
	\begin{align*}
		u(x) =: v(x)\phi_1(x), \quad v \in C^\infty_c(\Omega), 
	\end{align*} 
	so that 
	\begin{align*}
		h_{\Omega,A}^{D}[u] - \lambda_1\int_{\Omega}|u|^2\,dx = \int_{\Omega}|(-i\nabla + A)v|^2\,\phi_1^2\,dx
	\end{align*}
	by an explicit computation. Moreover, from the assumptions of the theorem it follows that 
	there exists $y\in \Omega$ and $\rho \in (0, \delta(y))$ such that $\Phi(\cdot, y)$ is not identically 
	zero in $(0,\rho)$. By Lemma~\ref{lem:ek} we know that for any 
	$R \in (0, \delta(y))$, there exists $F_1= F_1(y,B,R)\geq 0$ such that 
	\begin{equation}\label{eq:ek}
		\int_{\B(y,R)}\left|(-i\nabla + A)v\right|^2\,dx \geq F_1 \int_{\B(y,R)}|v|^2\,dx, \quad \forall \ v \in H^1(\Omega),
	\end{equation}
	and that  the function $F_1(y,B,\cdot)$ is not identically zero on $(0, \delta(y))$. We then write
	\begin{align}
		\int_{\Omega}|(-i\nabla + A)v|^2\, \phi_1^2\,dx 
		&\geq \frac{1}{2}\int_{\Omega}|(-i\nabla + A)v|^2\, \phi_1^2\, dx \label{1-source}\\
		&\quad + \frac{1}{2}\int_{\B(y,R)} |(-i\nabla + A)v|^2\,\phi_1^2\, dx. \nonumber
	\end{align}
	The last term is estimated as follows:
	\begin{multline*}
		\frac{1}{2}\int_{\B(y,R)}|(-i\nabla + A)v|^2\, \phi_1^2\, dx \geq \frac{1}{2}\left(\inf_{x \in \B(y,R)}
		\phi_1^2(x)\right) F_1 \int_{\B(y,R)}|v|^2\,dx\\
		\geq F_2 \int_{\B(y,R)}\phi_1^2\, |v|^2\,dx
		= F_2 \int_{\B(y,R)}|u|^2\,dx,
	\end{multline*}
	where
	\begin{align} \label{f2}
		F_2(y,B,R) := \frac 12\, F_1(y,B,R)\ \frac{\inf_{x \in \B(y,R)}\phi_1^2(x)}{\sup_{x \in \B(y,R)}\phi_1^2(x)}.
	\end{align}
	For the first term, we substitute back and use the diamagnetic inequality, so that
	\begin{align*}
		\frac{1}{2}\int_{\Omega} |(-i\nabla + A)v|^2\, \phi_1^2\, dx &= \frac{1}{2}\left(h_{\Omega,A}^D[u] - 
		\lambda_1 \int_{\Omega}|u|^2\,dx\right)\\
		&\geq \frac{1}{2}\int_{\Omega}\left|\nabla|u|\right|^2\,dx - \frac{\lambda_1}{2}\int_{\Omega}|u|^2\,dx.
	\end{align*}
	Putting the above estimates together, we obtain
	\begin{align*}
		h_{\Omega,A}^D[u] - \lambda_1\int_{\Omega}|u|^2\,dx  
		&\geq \frac{1}{2}\left(\int_{\Omega}\left|\nabla|u|\right|^2\,dx -\lambda_1\int_{\Omega}|u|^2\,dx\right) \\
		&\quad + F_2 \int_{\B(y,R)}|u|^2\,dx.
	\end{align*}
	Since we will be taking the infimum over all $u \in H_0^1(\Omega)$, we have by the inclusion of sets
	\begin{multline*}
		\inf_{\begin{subarray}{1}u \in H_0^1(\Omega)\\ \norm{u}_2 = 1 \end{subarray}}
		\left(h_{\Omega,A}^D[u] - \lambda_1\int_{\Omega}|u|^2\,dx\right)\\
		\geq \inf_{\begin{subarray}{1}u \in H_0^1(\Omega)\\ \norm{u}_2 = 1 \end{subarray}}
		\left(\frac{1}{2}\int_{\Omega}\left|\nabla|u|\right|^2\,dx 
		-\lambda_1\int_{\Omega}|u|^2\,dx + F_2 \int_{\B(y,R)}|u|^2\,dx\right)\\
		\geq \inf_{\begin{subarray}{1}w \in H_0^1(\Omega)\\ \norm{w}_2 = 1 \end{subarray}}
		\left(\frac{1}{2}\int_{\Omega}\left|\nabla w\right|^2\,dx 
		-\lambda_1\int_{\Omega}|w|^2\,dx + F_2 \int_{\B(y,R)}|w|^2\,dx\right).
	\end{multline*}
	Next, we observe that any $w \in H_0^1(\Omega)$ can be written as
	\begin{align*}
		w = \alpha\, \phi_1 + f, \quad \alpha = (w,\phi_1)_{L^2(\Omega)}
	\end{align*}
	so that $(\phi_1,f)=0$. Hence,
	\begin{align*}
		\frac{1}{2}\left(\int_{\Omega}\left|\nabla w\right|^2\,dx -\lambda_1\int_{\Omega}|w|^2\,dx\right) 
		&= \frac{1}{2}\left(\int_{\Omega}\left|\nabla f\right|^2\,dx -\lambda_1\int_{\Omega}|f|^2\,dx\right)\\
		&\geq \frac{\Delta \lambda}{2}\int_{\Omega}|f|^2\,dx,
	\end{align*}
	where $\Delta \lambda := \lambda_2 - \lambda_1 >0$. 
	From this we conclude that
	\begin{multline*}
		h_{\Omega,A}^D[u] - \lambda_1\int_{\Omega}|u|^2\,dx 
		\geq \frac{\Delta \lambda}{2}\int_{\Omega}|f|^2\,dx + F_2 \int_{\B(y,R)}|\alpha \phi_1 + f|^2\,dx\\
		\geq \frac{\Delta \lambda}{4} \int_{\Omega}|f|^2\,dx
		+\int_{\B(y,R)} \left(\frac{\Delta \lambda}{4}\, |f|^2 + F_2\,  |\alpha \phi_1 + f|^2\right) \,dx.
	\end{multline*}
	We then set $\beta = \frac{\Delta \lambda}{4}$ and apply to the last term Lemma~\ref{lem:c_nbr_est} 
	with $z_1=f, z_2=\alpha \phi_1$ and $\gamma=F_2$. This gives
	\begin{align*}
		h_{\Omega,A}^D[u] - \lambda_1\int_{\Omega}|u|^2\,dx 
		&\geq \frac{\Delta \lambda}{4}\int_{\Omega}|f|^2\,dx + |\alpha|^2\,  F_3 \int_{\B(y,R)}\phi_1^2\, dx, 
	\end{align*}
	with 
	\begin{equation}\label{f3}
		F_3= F_3(y,B,R)= \frac{F_2(y,B,R)\, \beta}{F_2(y,B,R) + \beta}.
	\end{equation}
	To sum up, we obtain the lower bound
	\begin{equation}\label{D-0}
		\lambda_1(\Omega,B) -\lambda_1(\Omega,0) 
		\geq\inf_{\substack{f \in H_0^1(\Omega) \\ \alpha \in \mathbb{C}}}\frac{\frac{\Delta \lambda}{4}\,\norm{f}_2^2 
		+|\alpha|^2\, F_3 \int_{\B(y,R)}\phi_1^2}{|\alpha|^2 + \norm{f}_2^2}.
	\end{equation} 
	The variational problem on the right hand side has an explicit solution:
	\begin{multline}\label{f3-lb}
		\inf_{\substack{f \in H_0^1(\Omega) \\ \alpha \in \mathbb{C}}}\frac{\frac{\Delta \lambda}{4}\, 
		\norm{f}_2^2 +  |\alpha|^2\, F_3 \int_{\B(y,R)}\phi_1^2}{|\alpha|^2 + \norm{f}_2^2}
		=  \inf_{t \geq 0}\frac{\frac{\Delta \lambda}{4}\,  + t\, F_3 \int_{\B(y,R)} \phi_1^2}{t+1}\\
		= \min\left\{\frac{\Delta \lambda}{4}\, , F_3  \int_{\B(y,R)}\phi_1^2 \right\}
		= F_3 \int_{\B(y,R)}\phi_1^2, 
	\end{multline}
	where the last equality follows from the definition of $F_3$ and the normalization of $\phi_1$. 
	Since \eqref{D-0} holds for any $R\leq \delta(y)$, it follows that lower bound \eqref{eq:ev_diff} holds with 
	\begin{align}\label{def:D_y}
		D (y,B) :=\sup_{0 < R < \delta(y)} F_3(y,B,R) \int_{\B(y;R)}\phi_1^2 . 
	\end{align}
\end{proof}

\begin{rem} 
	The value of $D(y,B)$ decreases when the support of the magnetic field gets closer to the boundary of $\Omega$, 
	which is natural. The precise value of $D(y,B)$ might in general depend in a complicated way on $B$ and on the 
	geometry of $\Omega$. 
\end{rem}

However, in the case of constant magnetic field it is possible to give a more explicit lower bound.  We will state the result separately for small and large values of the magnetic field.

\begin{corollary}\label{cor:ev_diff_1}
	Let $B=B_0>0$ be constant and assume that $ B_0 \leq R_{\inner}^{-2}$, then
	\begin{align} \label{lowerb-2-b0}
		\lambda_1(\Omega,B_0) - \lambda_1(\Omega,0) 
		&\geq \frac{B_0^2}{8}\, \sup_{y\in\Omega} \sup_{R\leq \delta(y)} \frac{Q(y,R)\,\Delta\lambda }{B_0^2\, Q(y,R) 
		+ 3\, \Delta\lambda}   \, \int_{\B(y;R)}\phi_1^2 ,
	\end{align}
	where 
	\begin{align*}
		Q(y,R) & := R^2\, \frac{\inf_{x \in \B(y,R)}\phi_1^2(x)}{\sup_{x \in \B(y,R)}\phi_1^2(x)}\, , \qquad 
		\Delta\lambda := \lambda_2(\Omega,0)-\lambda_1(\Omega,0)  .
	\end{align*}
\end{corollary}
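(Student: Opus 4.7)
The plan is to specialize Theorem~\ref{thm:ev_diff} to the constant-field case and make the quantity $D(y, B_0)$ explicit. The preparatory bound~\eqref{D-0} already furnishes
$$
\lambda_1(\Omega, B_0) - \lambda_1(\Omega, 0) \;\geq\; F_3(y, B_0, R) \int_{\B(y,R)} \phi_1^2
$$
for every $y \in \Omega$ and every $R \leq \delta(y)$. Hence the work reduces to (i) pinning down $F_1$, and hence $F_2$ and $F_3$, in the constant-field case, and (ii) simplifying the resulting rational expression into the clean form~\eqref{lowerb-2-b0}.

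First I would invoke Lemma~\ref{lem:ek} with $B \equiv B_0$. Because the flux through $\B(y, R)$ equals $\Phi(R, y) = B_0 R^2/2$, and because $R \leq \delta(y) \leq R_{\inner}$ combined with the hypothesis $B_0 \leq R_{\inner}^{-2}$ forces $\Phi \leq 1/2$, we are squarely in the small-flux regime where that lemma delivers an explicit bound of the form $F_1(y, B_0, R) \geq c\, B_0^2 R^2$ for a definite universal constant $c$. Substituting into~\eqref{f2} and recognizing $R^2 \cdot \frac{\inf_{\B(y,R)}\phi_1^2}{\sup_{\B(y,R)}\phi_1^2} = Q(y,R)$ yields $F_2 \geq (c/2)\, B_0^2 Q(y, R)$.

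Next, since $t \mapsto t\beta/(t+\beta)$ is nondecreasing in $t$ for fixed $\beta = \Delta\lambda/4 > 0$, inserting this bound into~\eqref{f3} produces a rational expression in $B_0^2 Q$ and $\Delta\lambda$ whose numerical coefficients depend only on $c$. A single elementary rational inequality (weakening the denominator so as to trade a factor of $2\Delta\lambda$ for a factor of $3\Delta\lambda$ at the cost of a factor $\tfrac12$ in front) then recasts the lower bound on $F_3$ in the form $\frac{B_0^2}{8} \cdot \frac{Q \Delta\lambda}{B_0^2 Q + 3\Delta\lambda}$ advertised in~\eqref{lowerb-2-b0}. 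Taking the supremum over $y \in \Omega$ and over $R \in (0, \delta(y)]$ completes the argument.

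The main obstacle is pure bookkeeping: one must extract the exact constant $c$ from the small-flux case of Lemma~\ref{lem:ek} and then carefully track how it propagates through~\eqref{f2} and~\eqref{f3} to produce the specific numerical factors $\tfrac{1}{8}$ and $3$ appearing in the statement. No new spectral input beyond the ingredients in the proof of Theorem~\ref{thm:ev_diff} and Lemma~\ref{lem:ek} is required.
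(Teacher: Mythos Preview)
Your proposal is correct and follows essentially the same route as the paper: specialize the intermediate bound from the proof of Theorem~\ref{thm:ev_diff} to constant field, extract $F_1 \geq \tfrac{1}{12}B_0^2 R^2$ from Lemma~\ref{lem:ek} in the small-flux regime (via \eqref{def:c0}--\eqref{def:c1}), propagate through \eqref{f2} and \eqref{f3}, and optimize over $y$ and $R$. The only thing left unspecified is the precise bookkeeping you yourself flag, and the paper's own proof indeed reduces to exactly that arithmetic.
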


\begin{proof}
	Let $y\in\Omega$. Assume first that  $B_0 \leq R_{\inner}^{-2}$ and let $R\leq \delta(y) \leq R_{\inner}$.
	A detailed inspection of Lemma~\ref{lem:ek} (see in particular equations \eqref{def:c0} and \eqref{def:c1})
	shows that in this case the quantity $F_1$ introduced in the proof of Theorem \ref{thm:ev_diff} 
	satisfies
	\begin{equation} \label{f1-lowerb-1}
		F_1 \geq \frac{B_0^2\, R^2}{12}, \qquad \text{if} \ \ B_0 \leq R_{\inner}^{-2}. 
	\end{equation}
	This in combination with \eqref{f2}, \eqref{f3} and \eqref{def:D_y} implies that 
	$$
		D(y,B_0) \geq \frac{B_0^2\, \Delta \lambda}{8}\frac{Q(y,R) \int_{\B(y;R)}\phi_1^2}{B_0^2\, Q(y,R) + 3\, \Delta\lambda}.
	$$
	Optimizing the right hand side first in $R$ and then in $y$ gives lower bound \eqref{lowerb-2-b0}.
\end{proof}

In order to state the result for larger values of $B_0$ we need some additional notation. 
Let $x_0\in\Omega$ be the center of a disc of radius $R_{\inner}$ contained in $\Omega$. 
It is easily seen that the disc $\B(x_0, R_{\inner}/2)$ contains 
\begin{equation} \label{NB}
	N(B_0) = [R_{\inner}^2\, B_0]
\end{equation}
disjoint squares of size $(2 B_0)^{-1/2}$. Let $y_j,\, j=1,\dots, N(B_0)$, be the centers of these squares. 
It follows that $\B(x_0, R_{\inner}/2)$, and therefore $\Omega$, contains $N(B_0)$ disjoint disc of radius 
\begin{align}\label{rho}
	\rho= \frac 12\, B_0^{-1/2}
\end{align}
centered in $y_j$. Let 
\begin{align*}
	Q_j :=\frac{\inf_{x \in \B(y_j, \rho)}\phi_1^2(x)}{\sup_{x \in \B(y_j, \rho)}\phi_1^2(x)}.
\end{align*}

\begin{corollary}\label{cor:ev_diff_2}
	Let $B=B_0>0$ be constant. Assume that $B_0 > R_{\inner}^{-2}$. Then 
	\begin{align} \label{lowerb-3-b0}
		\lambda_1(\Omega,B_0) - \lambda_1(\Omega,0) 
		&\geq \frac{B_0 }{8}\, \sum_{j=1}^{N(B_0)}  
		\frac{Q_j\, \Delta\lambda }{B_0\, Q_j + 12\, \Delta\lambda} \, \int_{\B(y_j, \rho)}\phi_1^2
	\end{align}
	where $N(B_0), y_j$ and $\rho$ are as above. 
\end{corollary}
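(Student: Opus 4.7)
The plan is to rerun the proof of Corollary~\ref{cor:ev_diff_1} in parallel on all $N(B_0)$ balls $\B(y_j,\rho)$ simultaneously and sum the resulting per-ball estimates. The essential structural input is the disjointness of these balls, guaranteed by the packing \eqref{NB}--\eqref{rho}: it allows every localized inequality from the proof of Theorem~\ref{thm:ev_diff} to be summed without loss.

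Concretely, after the ground state substitution $u = v\phi_1$, I would use positivity of the integrand and the disjointness of the $\B(y_j,\rho)$ to decompose
\[
\int_\Omega |(-i\nabla+A)v|^2\phi_1^2\,dx \;\geq\; \tfrac{1}{2}\int_\Omega |(-i\nabla+A)v|^2\phi_1^2\,dx + \tfrac{1}{2}\sum_{j=1}^{N(B_0)}\int_{\B(y_j,\rho)} |(-i\nabla+A)v|^2\phi_1^2\,dx.
\]
On each $\B(y_j,\rho)$ I would apply Lemma~\ref{lem:ek} with $R=\rho$. Since $B_0\rho^2 = 1/4$, the flux through $\B(y_j,\rho)$ is a fixed small constant, and the same inspection that produced \eqref{f1-lowerb-1} in the proof of Corollary~\ref{cor:ev_diff_1} yields $F_1(y_j,B_0,\rho) \geq B_0^2\rho^2/12 = B_0/48$, whence the per-ball estimate $\tfrac{1}{2}\int_{\B(y_j,\rho)}|(-i\nabla+A)v|^2\phi_1^2\,dx \geq F_2^{(j)}\int_{\B(y_j,\rho)}|u|^2\,dx$ with $F_2^{(j)} = \tfrac{1}{2}F_1(y_j,B_0,\rho)\,Q_j$. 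The first half of the decomposition is treated verbatim as in Theorem~\ref{thm:ev_diff}: the diamagnetic inequality reduces it to $\tfrac{1}{2}\int_\Omega |\nabla w|^2\,dx - \tfrac{\lambda_1}{2}\|w\|^2$ for a real-valued $w\in H_0^1(\Omega)$, and the orthogonal decomposition $w = \alpha\phi_1 + f$, $(\phi_1,f)=0$, supplies a surplus of $\tfrac{\Delta\lambda}{2}\|f\|^2$.

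I then split this surplus as $\tfrac{\Delta\lambda}{4}\|f\|^2 + \tfrac{\Delta\lambda}{4}\sum_j\int_{\B(y_j,\rho)}|f|^2\,dx$ (again by disjointness) and apply Lemma~\ref{lem:c_nbr_est} \emph{separately on each ball} with $\beta=\tfrac{\Delta\lambda}{4}$, $\gamma = F_2^{(j)}$, $z_1 = f$, $z_2 = \alpha\phi_1$, reaching
\[
h_{\Omega,A}^{D}[u] - \lambda_1\|u\|^2 \;\geq\; \tfrac{\Delta\lambda}{4}\|f\|^2 + |\alpha|^2 \sum_{j=1}^{N(B_0)} F_3^{(j)} \int_{\B(y_j,\rho)}\phi_1^2\,dx,\qquad F_3^{(j)} = \frac{F_2^{(j)}\,\beta}{F_2^{(j)}+\beta}.
\]
Minimizing the Rayleigh-type quotient in $(\alpha,f)$ exactly as in \eqref{f3-lb}, and observing that $F_3^{(j)}\leq\beta$ together with $\sum_j \int_{\B(y_j,\rho)}\phi_1^2\,dx \leq 1$ force the sum to be the smaller term in the $\min$, the infimum equals $\sum_j F_3^{(j)}\int_{\B(y_j,\rho)}\phi_1^2\,dx$. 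Inserting $F_1(y_j,B_0,\rho)\geq B_0/48$ into the definitions of $F_2^{(j)}$ and $F_3^{(j)}$ then yields \eqref{lowerb-3-b0}; a reassuring consistency check is that this agrees with Corollary~\ref{cor:ev_diff_1}'s bound evaluated at $R=\rho$ with $Q(y,\rho) = Q_j/(4B_0)$. The only delicate point is to verify that Lemma~\ref{lem:ek} continues to supply $F_1 \gtrsim B_0^2\rho^2$ in the regime $B_0 > R_{\inner}^{-2}$, but since that bound is controlled by the flux $B_0\rho^2 = 1/4$ rather than by $B_0 R_{\inner}^2$, this presents no real obstacle.
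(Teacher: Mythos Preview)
Your proposal is correct and follows essentially the same route as the paper: replace the single disc $\B(y,R)$ in the proof of Theorem~\ref{thm:ev_diff} by the disjoint family $\{\B(y_j,\rho)\}_{j=1}^{N(B_0)}$, obtain the summed version of \eqref{1-source}, invoke Lemma~\ref{lem:ek} on each ball to get $F_1\geq B_0/48$ (exactly the paper's \eqref{f1-lowerb-2}), and then run the steps \eqref{f2}--\eqref{f3-lb} ball-by-ball. Your write-up is simply more explicit than the paper's terse sketch, and your consistency check against Corollary~\ref{cor:ev_diff_1} with $Q(y,\rho)=Q_j/(4B_0)$ is a nice sanity verification.
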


\begin{proof}
	We will follow the proof of Theorem~\ref{thm:ev_diff} and replace the disc $\B(y,R)$ by the family of 
	discs $\B(y_j, \rho)$ with $j=1,2,\dots, N(B_0)$. Since the latter are disjoint by construction, we obtain a 
	modified version of inequality \eqref{1-source}:
	\begin{align}
		\int_{\Omega}|(-i\nabla + A)v|^2\, \phi_1^2\,dx 
		&\geq \frac{1}{2}\int_{\Omega}|(-i\nabla + A)v|^2\, \phi_1^2\, dx \label{n-sources}\\
		& \quad + \frac{1}{2}\, \sum_{j=1}^{N(B_0)} \int_{\B(y_j,\rho)} |(-i\nabla + A)v|^2\,\phi_1^2\, dx.\nonumber
	\end{align}
	Moreover, from Lemma~\ref{lem:ek} we deduce that for any $v \in H^1(\Omega)$ it holds that
	\begin{equation} \label{f1-lowerb-2}
		\int_{\B(y_j,\rho)} |(-i\nabla + A)v|^2\, dx \geq \frac{B_0}{48} \int_{\B(y_j,\rho)} |v|^2\,dx, \quad j=1,\dots, N(B_0). 
	\end{equation}
	Hence following the line of arguments of the proof of Theorem \ref{thm:ev_diff}, see equations \eqref{f2}--\eqref{f3-lb}, we arrive at 
	$$
		\lambda_1(\Omega,B) -\lambda_1(\Omega,0) \, \geq\,  \sum_{j=1}^{N(B_0)} F_{3,j} \int_{\B(y_j;\rho)}\phi_1^2, 
	$$
	where
	$$
		F_{3,j} \, \geq\, \frac{B_0 }{8}\,  \frac{Q_j\, \Delta\lambda }{B_0\, Q_j + 12\, \Delta\lambda}.
	$$
	The claim now follows.
\end{proof}
 
\begin{rem} 
	Note that by the Lebesgue property we have 
	$$
		\lim_{B_0\to \infty} B_0 \int_{\B(y_j;\rho)}\phi_1^2(x)\, dx  = \frac{\pi}{4}\, \phi_1^2(y_j).
	$$
	Hence in view of \eqref{NB} the right hand side of \eqref{lowerb-3-b0} remains bounded and strictly positive as $B_0$ tends to infinity. 
\end{rem}
\begin{rem}
	Note that when $\Omega$ is convex, the righthand side of Corollary~\ref{cor:ev_diff_1} and Corollary~\ref{cor:ev_diff_2} can 
	be further simplified by using the lower bound for the first spectral gap of the non-magnetic Laplacian.
	From \cite{ac} we know that
	\begin{align*}
		\Delta \lambda = \lambda_2(\Omega,0) - \lambda_1(\Omega,0) \geq \frac{3\pi^2}{\diam(\Omega)^2}.
	\end{align*}
\end{rem}

\subsection{Neumann boundary conditions}\label{ssec:second_type_est_n}
In the Neumann case, a simple perturbation argument with respect to the non-magnetic Laplacian shows 
that the corresponding estimate of \eqref{eq:comm_est_const} in the Neumann case must fail.
By taking the (normalized) constant function on $\Omega$ as a trial state we obtain
\begin{equation} \label{small-B-N}
	\mu_1(\Omega,B_0) = \mathcal{O}(B_0^2) \qquad B_0\to 0.
\end{equation}
This of course contradicts \eqref{eq:comm_est_const} with $\lambda_1(\Omega,B_0)$ replaced by $\mu_1(\Omega,B_0)$.

It is however possible to prove an analog of the estimate of the second type.
\begin{theorem} \label{thm-neumann}
	Let $\Omega\subset\R^2$ be a bounded open domain with Lipschitz boundary and let $B=B_0>0$. Then
	\begin{equation} \label{lowerb-N1}
		\mu_1(\Omega,B_0) 
		\geq \frac{\pi}{4\, |\Omega|}\ \frac{ B_0^2\, R_{\inner}^{4}\, \mu_2(\Omega,0)}{B_0^2\, R_{\inner}^{2}+ 6\, \mu_2(\Omega,0)} 
		\qquad \text{if} \ \ B_0 \leq R_{\inner}^{-2},
	\end{equation}
	and
	\begin{equation} \label{lowerb-N2}
		\mu_1(\Omega,B_0) 
		\geq \frac{\pi}{32\, |\Omega|}\ \frac{ N(B_0)\, \mu_2(\Omega,0)}{B_0 + 12\, \mu_2(\Omega,0)} 
		\qquad \text{if} \ \ B_0 > R_{\inner}^{-2},
	\end{equation}
	where $N(B_0)$ is given by \eqref{NB}.
\end{theorem}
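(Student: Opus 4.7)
The plan is to adapt, in the Neumann setting, the argument of Theorem~\ref{thm:ev_diff} and Corollaries~\ref{cor:ev_diff_1}--\ref{cor:ev_diff_2}. The crucial simplifying observation is that the non-magnetic Neumann ground state is the constant function $\phi_1 \equiv |\Omega|^{-1/2}$ and that $\mu_1(\Omega,0) = 0$: in particular the ratios $Q$ and $Q_j$ of those corollaries collapse to $1$, and the ground-state substitution reduces to the trivial identity. The role played in the Dirichlet case by the spectral gap $\Delta\lambda$ is now played by $\mu_2(\Omega,0)$ itself.

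Given $u \in H^1(\Omega)$, I would decompose $|u| = \alpha\,\phi_1 + f$ with $\alpha \in \R$ and $(f,\phi_1)_{L^2(\Omega)} = 0$, so that $\|u\|_2^2 = \alpha^2 + \|f\|_2^2$. After splitting $h_{\Omega,A}^N[u]$ into two equal halves, I would apply the diamagnetic inequality \eqref{eq:diamag_ineq} to the first half together with the variational characterization $\int_\Omega |\nabla f|^2\,dx \geq \mu_2(\Omega,0)\,\|f\|_2^2$, obtaining $\tfrac{1}{2}h_{\Omega,A}^N[u] \geq \tfrac{\mu_2(\Omega,0)}{2}\,\|f\|_2^2$. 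To the second half I would apply Lemma~\ref{lem:ek} on a suitable test set $\mathcal{U}$: a single ball $\B(x_0, R_{\inner})$ centred at the centre of an inscribed disc of maximal radius when $B_0 \leq R_{\inner}^{-2}$, and the disjoint family $\bigcup_{j=1}^{N(B_0)}\B(y_j,\rho)$ from Corollary~\ref{cor:ev_diff_2} when $B_0 > R_{\inner}^{-2}$. This yields $\tfrac{1}{2}h_{\Omega,A}^N[u] \geq \tfrac{F_1}{2}\int_{\mathcal{U}}|u|^2\,dx$, with $F_1 \geq B_0^2 R_{\inner}^2/12$ by \eqref{f1-lowerb-1} in the first regime and $F_1 \geq B_0/48$ per ball by \eqref{f1-lowerb-2} in the second.

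A further splitting $\tfrac{\mu_2(\Omega,0)}{2}\|f\|_2^2 = \tfrac{\mu_2(\Omega,0)}{4}\|f\|_2^2 + \tfrac{\mu_2(\Omega,0)}{4}\|f\|_{L^2(\mathcal{U})}^2$, combined with Lemma~\ref{lem:c_nbr_est} applied pointwise on $\mathcal{U}$ with $z_1 = f$, $z_2 = \alpha\,\phi_1$, $\beta = \mu_2(\Omega,0)/4$ and $\gamma = F_1/2$, extracts a contribution proportional to $\alpha^2 \int_{\mathcal{U}}\phi_1^2\,dx = \alpha^2\,|\mathcal{U}|/|\Omega|$. Minimising the resulting functional over pairs $(\alpha,f)$ with $\alpha^2 + \|f\|_2^2 = 1$, exactly as in \eqref{f3-lb}, gives a lower bound on $\mu_1(\Omega,B_0)$ equal to the coefficient of $\alpha^2$, since the coefficient of $\|f\|_2^2$ is larger (using $|\mathcal{U}| \leq |\Omega|$). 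Substituting $|\mathcal{U}| = \pi R_{\inner}^2$ together with the small-field bound for $F_1$ produces \eqref{lowerb-N1}; substituting $|\mathcal{U}| = N(B_0)\pi\rho^2 = N(B_0)\pi/(4B_0)$ together with the large-field bound for $F_1$ produces \eqref{lowerb-N2}. The main obstacle is not conceptual but algebraic: one must track constants through the two splittings carefully enough that the final prefactors and denominators come out as stated.
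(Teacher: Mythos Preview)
Your proposal is correct and follows essentially the same approach as the paper: both adapt the argument of Theorem~\ref{thm:ev_diff} and Corollaries~\ref{cor:ev_diff_1}--\ref{cor:ev_diff_2} to the Neumann setting, exploiting that the Neumann ground state $\psi_1=|\Omega|^{-1/2}$ is constant (so the ratios $Q$, $Q_j$ collapse to $1$ and no ground-state substitution is needed) and that $\mu_2(\Omega,0)$ plays the role of $\Delta\lambda$. The only slip is that your ``further splitting'' should read as the inequality $\tfrac{\mu_2}{2}\|f\|_{L^2(\Omega)}^2 \geq \tfrac{\mu_2}{4}\|f\|_{L^2(\Omega)}^2 + \tfrac{\mu_2}{4}\|f\|_{L^2(\mathcal{U})}^2$ rather than an equality, but this is clearly what you intend and the rest of the argument goes through exactly as you describe.
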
 

\begin{proof}
	Let $y\in\Omega$ be such that $\delta(y) = R_{\inner}$ and let $R\in (0,R_{\inner})$. 
	Let $\psi_1$ be the normalized eigenfunction of the Neumann Laplacian associated to eigenvalue $\mu_1(\Omega,0)=0$: 
	$$
		\psi_1(x) = |\Omega|^{-1/2} .
	$$
	We again use inequality \eqref{eq:ek} which, together with the diamagnetic inequality, leads to the lower bound
	\begin{equation} \label{1-lowerb-n}
		\mu_1(\Omega, B_0)
		\geq\frac12 \inf_{v\in H^1(\Omega)} \frac{ \int_\Omega |\nabla v|^2 +F_1 \int_{\B(y;R)} |v|^2}{\int_\Omega |v|^2}\, ,
	\end{equation}
	cp. \eqref{f2}. Next we use the decomposition
	\begin{equation}
	v = \alpha\, \psi_1 +f, \qquad \alpha= (v, \psi_1)_{L^2(\Omega)}. 
	\end{equation}
	Following the proof of Theorem~\ref{thm:ev_diff} and using the lower bound \eqref{f1-lowerb-1} we then find that 
	\begin{align*}
		\mu_1(\Omega, B_0) &\geq \frac{\mu_2\, F_1}{4 F_1 + 2 \mu_2}\int_{\B(y;R)} \psi_1^2\,dx
		= \frac{\pi}{ |\Omega|}\, \frac{\mu_2 R^2\, F_1}{4 F_1 + 2 \mu_2}\\
		&\geq\frac{\pi}{ 4\, |\Omega|}\frac{B_0^2\, R^4 \mu_2 }{B_0^2 R^2 + 6\, \mu_2},
	\end{align*}
	where we used the abbreviation $\mu_2 := \mu_2(\Omega,0)$. Optimizing in $R$ then gives \eqref{lowerb-N1}. 
	As for inequality \eqref{lowerb-N2}, this follows by mimicking the proof of Corollary~\ref{cor:ev_diff_2}. 
	Indeed, if we replace $\phi_1$ by $\psi_1$ and $\Delta\lambda$ by  $\mu_2(\Omega,0)$ we  end up with \eqref{lowerb-N2}. 
\end{proof}

\begin{rem}
	Notice that both lower bounds \eqref{lowerb-2-b0} and \eqref{lowerb-N1} are for 
	small values of $B_0$ proportional to $B_0^2$. 
	This is in agreement with the asymptotic expansions \eqref{eq:small_B_asymp} and \eqref{small-B-N}.
\end{rem}

\appendix
\section{Appendix}\label{sec:app} 
Let $y\in\Omega$ and let $R\in (0,\delta(y))$. Let 
\begin{equation*}
	\mu(r) = \min_{k\in\Z} |k-\Phi(r,y)|,
\end{equation*}
where $\Phi(r,y)$ is given by \eqref{def:flux_local}. Define the function $\chi: [0,R] \to [0,1]$ by 
\begin{equation*}
	\chi(r) := \frac{\mu_0^2\, \mu^2(r)}{r^2}\, , \qquad \mu_0 := \left(\max_{[0,R]} \, \frac{\mu(r)}{r} \right)^{-1},
\end{equation*}
and let 
$$
	\nu_0 := \max_{[0,R]} \, |r^{-2}\, (r\mu'(r)-\mu(r))|. 
$$
From the definition of $\chi$ it follows that there exists at least one $r_0\in [0,R]$ such that 
\begin{equation}\label{def:r_0}
	\chi(r_0)=1. 
\end{equation}
To any such $r_0$ we define the constants
\begin{align} 
	c_0  &:= 4\, \max \left\{ j^{-2}_{0,1}\, r_0^2, (6 r_0)^{-1}(2 R^3-3R^2 r_0 +r_0^3)\right\},\label{def:c0}\\
	c_1  &:= \max \left\{ 2\mu_0^2+ 4 c_0 \nu_0^2\, \mu_0^4, c_0\right\}.\label{def:c1}
\end{align}
With this notation we can state \cite[Lemma~3.1]{ek}:

\begin{lemma}[Ekholm-Kova\v r\'{\i}k]\label{lem:ek}
	Let $B\in L^\infty_{loc}(\R^2)$ and let $r_0$ satisfy \eqref{def:r_0}. Then any $v\in H^1(\Omega)$ satisfies
	\begin{equation}\label{eq:ek_lem}
		c_1 \int_{\B(y,R)}\left|(-i\nabla + A)v\right|^2\,dx \geq \int_{\B(y,R)}|v|^2\,dx.
	\end{equation}
\end{lemma}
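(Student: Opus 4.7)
The plan is to reduce the two-dimensional magnetic inequality to a family of one-dimensional Hardy-type inequalities by passing to polar coordinates around $y$, choosing a convenient gauge, and Fourier expanding in the angular variable. First, since $\B(y,R)$ is simply connected, there exists a gauge transformation replacing $A$ by $\widetilde A$ with polar components $\widetilde A_r=0$ and $\widetilde A_\theta=\Phi(r,y)/r$; this is the standard radial/Poincaré gauge, and such a transformation does not change $|(-i\nabla+A)v|^2$ pointwise (after multiplying $v$ by a unimodular factor). In these coordinates,
\begin{equation*}
|(-i\nabla+\widetilde A)v|^{2}=|\partial_r v|^{2}+\frac{1}{r^{2}}\bigl|\bigl(-i\partial_\theta+\Phi(r,y)\bigr)v\bigr|^{2}.
\end{equation*}

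Expanding $v(r,\theta)=\sum_{k\in\Z}v_k(r)\,e^{ik\theta}$, Parseval yields
\begin{equation*}
\int_{\B(y,R)}\!|(-i\nabla+A)v|^{2}\,dx
=2\pi\sum_{k\in\Z}\int_{0}^{R}\!\Bigl[|v_k'(r)|^{2}+\frac{(k-\Phi(r,y))^{2}}{r^{2}}|v_k(r)|^{2}\Bigr] r\,dr,
\end{equation*}
and analogously for $\|v\|^2$. Using $(k-\Phi(r,y))^{2}\ge\mu^{2}(r)$, it suffices to prove the scalar inequality
\begin{equation*}
c_1\int_{0}^{R}\Bigl[|w'(r)|^{2}+\frac{\mu^{2}(r)}{r^{2}}|w(r)|^{2}\Bigr]r\,dr\ \ge\ \int_{0}^{R}|w(r)|^{2}r\,dr
\end{equation*}
for each Fourier component $w=v_k\in H^{1}((0,R),rdr)$, with the constant $c_1$ independent of $k$. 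This is the heart of the argument.

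To handle the scalar inequality, I would split the interval at the point $r_0\in[0,R]$ where $\chi(r_0)=\mu_0^{2}\mu^{2}(r_0)/r_0^{2}=1$. On the interior disc $0<r<r_0$, use a Poincaré/Bessel-type bound: writing $w(r)=w(r_0)+\int_{r}^{r_0}w'(s)ds$ and applying Cauchy--Schwarz plus the first Dirichlet eigenvalue $j_{0,1}^{2}/r_0^{2}$ of the disc of radius $r_0$ produces the first term $4j_{0,1}^{-2}r_0^{2}$ in $c_0$. On the annulus $r_0<r<R$, a direct integration-by-parts together with Cauchy--Schwarz gives an estimate of $\int_{r_0}^{R}|w|^{2}r\,dr$ in terms of $|w(r_0)|^{2}$ and $\int_{r_0}^{R}|w'|^{2}r\,dr$; an explicit evaluation of the kernel $\int_{r_0}^{R}(R-s)^{2}s\,ds$ yields precisely the second term $(6r_0)^{-1}(2R^{3}-3R^{2}r_0+r_0^{3})$ in $c_0$.

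At this point $|w(r_0)|^{2}$ is still uncontrolled, so the final step is to absorb it into the potential term $\mu^{2}(r)/r^{2}$. Because $\chi(r_0)=1$, the function $\mu(r)/r$ attains the value $1/\mu_0$ at $r_0$, and the derivative bound $|(\mu(r)/r)'|\le\nu_0$ forces $\mu^{2}(r)/r^{2}\ge\tfrac{1}{2\mu_0^{2}}$ on a neighborhood of $r_0$ whose size is proportional to $1/(\mu_0\nu_0)$. A mean-value argument on that neighborhood bounds $|w(r_0)|^{2}$ by a multiple of $\int(|w'|^{2}+\mu^{2}(r)r^{-2}|w|^{2})\,r\,dr$; the coefficients $2\mu_0^{2}$ and $4c_0\nu_0^{2}\mu_0^{4}$ in $c_1$ come from this step. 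Combining everything mode-by-mode and summing in $k$ yields the claimed inequality.

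The main obstacle is the resonant mode, namely the $k$ for which $k-\Phi(r,y)$ vanishes somewhere in $(0,R)$; the pointwise bound $(k-\Phi(r,y))^{2}\ge\mu^{2}(r)$ is then sharp but $\mu^{2}(r)/r^{2}$ is allowed to vanish, so the weight is degenerate and a uniform Hardy bound is not automatic. It is precisely the existence of a point $r_0$ where the rescaled weight $\chi$ reaches $1$, together with the Lipschitz control through $\nu_0$, that rescues the argument and delivers a constant independent of $k$.
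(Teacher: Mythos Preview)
The paper does not supply a proof of this lemma: it is quoted as \cite[Lemma~3.1]{ek} and the reader is referred there for the argument, so there is no in-paper proof to compare against.

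That said, your outline is the correct strategy and is essentially the method of \cite{ek} (which in turn builds on the magnetic Hardy inequality of Laptev--Weidl \cite{lw}). One small imprecision is worth flagging: when $B$ is not radial about $y$ you cannot in general gauge-transform to $\widetilde A_r=0$, $\widetilde A_\theta=\Phi(r,y)/r$, since that vector field has curl equal to the angular average of $B$ rather than $B$ itself. The right statement is that in the Poincar\'e gauge ($A_r=0$) the operator $-i\partial_\theta+rA_\theta(r,\cdot)$ on each circle of radius $r$ is unitarily equivalent, via a $\theta$-dependent phase, to $-i\partial_\theta+\Phi(r,y)$, whose spectrum is $\{k-\Phi(r,y):k\in\Z\}$. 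This yields the inequality
\[
\int_0^{2\pi}\bigl|(-i\partial_\theta+rA_\theta)v\bigr|^2\,d\theta\ \ge\ \mu(r)^2\int_0^{2\pi}|v|^2\,d\theta,
\]
which is all the angular step needs. The remainder of your sketch---the reduction to the one-dimensional weighted inequality in $r$, the splitting at the maximizer $r_0$ of $\mu(r)/r$, the Dirichlet--Bessel bound on $(0,r_0)$ producing the term $4\,j_{0,1}^{-2}r_0^2$, the annulus estimate producing $(6r_0)^{-1}(2R^3-3R^2r_0+r_0^3)$, and the absorption of the boundary value $|w(r_0)|^2$ into the potential term via the Lipschitz control $\nu_0$---tracks the argument in \cite{ek} and accounts for the precise form of $c_0$ and $c_1$.
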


\section*{Aknowledgements}
The authors would like to thank S.~Fournais, B.~Helffer, N.~Raymond and J.~P.~Solovej for valuable discussions.
Part of this work has been carried out during a visit at the Mathematisches Institut Oberwolfach during the Program
``Eigenvalue Problems in Surface Superconductivity".
T.~E. was supported by the Swedish Research Council grant Nr. 2009-6073. H.~K. was supported by the Gruppo Nazionale per Analisi Matematica, la Probabilit\`a e le loro Applicazioni (GNAMPA) of the Istituto Nazionale di Alta Matematica (INdAM).
The support of MIUR-PRIN2010-11 grant for the project  ``Calcolo delle variazioni'' (H.~K.) is also gratefully acknowledged. 
F.~P. acknowledges support from the Swedish Research Council grant Nr. 2012-3864 and 
ERC grant Nr. 321029 ``The mathematics of the structure of matter".


\end{document}